\newtheorem{thm}{Theorem}[section]
\newtheorem*{thm*}{Theorem}
\newtheorem{lem}[thm]{Lemma}
\newtheorem{prop}[thm]{Proposition}
\theoremstyle{definition}
\newtheorem{defn}[thm]{Definition}
\newtheorem{q}[thm]{Question}
\newtheorem*{notn*}{Notation}
\newtheorem*{hyp*}{Hypothesis}
\theoremstyle{remark}
\newtheorem{rem}[thm]{Remark}
\newtheorem*{rem*}{Remark}
\numberwithin{equation}{section}
\newcommand{\secref}[1]{Section~\textup{\ref{#1}}}
\newcommand{\thmref}[1]{Theorem~\textup{\ref{#1}}}
\newcommand{\propref}[1]{Proposition~\textup{\ref{#1}}}
\newcommand{\defnref}[1]{Definition~\textup{\ref{#1}}}
\newcommand{\remref}[1]{Remark~\textup{\ref{#1}}}
\newcommand{\midtext}[1]{\quad\text{#1}\quad}
\newcommand{\righttext}[1]{\quad\text{#1 }}
\newcommand{\N}{\mathbb N}
\newcommand{\T}{\mathbb T}
\newcommand{\CC}{\mathcal C}
\newcommand{\KK}{\mathcal K}
\newcommand{\GG}{\mathcal G}
\newcommand{\LL}{\mathcal L}
\newcommand{\OO}{\mathcal O}
\newcommand{\NN}{\mathcal N}
\renewcommand{\AA}{\mathcal A}
\renewcommand{\epsilon}{\varepsilon}
\DeclareMathOperator{\aut}{Aut}
\DeclareMathOperator{\ad}{Ad}
\DeclareMathOperator*{\spn}{span}
\DeclareMathOperator*{\clspn}{\overline{\spn}}
\newcommand{\id}{\text{\textup{id}}}
\newcommand{\case}{& \text{if }}
\newcommand{\<}{\langle}
\renewcommand{\>}{\rangle}
\newcommand{\iso}{\overset{\cong}{\longrightarrow}}
\renewcommand{\bar}{\overline}
\newcommand{\wilde}{\widetilde}
\begin{document}

\title{Ionescu's theorem for higher rank graphs}

\author[Kaliszewski]{S. Kaliszewski}
\address{School of Mathematical and Statistical Sciences
\\Arizona State University
\\Tempe, Arizona 85287}
\email{kaliszewski@asu.edu}

\author[Morgan]{Adam Morgan}
\address{School of Mathematical and Statistical Sciences
\\Arizona State University
\\Tempe, Arizona 85287}
\email{anmorgan@asu.edu}

\author[Quigg]{John Quigg}
\address{School of Mathematical and Statistical Sciences
\\Arizona State University
\\Tempe, Arizona 85287}
\email{quigg@asu.edu}

\subjclass[2000]{Primary  46L05}

\keywords{higher-rank graph $C^*$-algebra, Mauldin-Williams graph}

\date{\today}

\begin{abstract}
    We will define new constructions similar to the graph systems of correspondences
    described by Deaconu \emph{et al}. We will use these to prove a version
    of Ionescu's theorem for higher
    rank graphs. Afterwards we will examine the properties of these constructions
    further and make contact with Yeend's topological k-graphs and the tensor-groupoid-valued product systems of Fowler and Sims.
\end{abstract}
\maketitle

\section{Introduction}
\label{intro}

In \cite{ionescu}, Ionescu defines
a natural correspondence associated to any \emph{Mauldin-Williams} graph.
A Mauldin-Williams graph is a directed graph with a compact metric space
associated to each vertex and a contractive map associated to each edge
(a more rigorous definition is presented below). These graphs generalize
iterated function systems and have self-similar invariant sets. Ionescu
proved that the Cuntz-Pimsner algebra of the correspondence associated to
any Mauldin-Williams graph is isomorphic to the graph $C^*$-algebra of
the underlying graph.

Here we prove an analogue for higher rank graphs. Our arguments make extensive
use of the \emph{graph systems of correspondences} construction presented in \cite{system}
and 
(we hope)
provide an interesting application of 
their
ideas.  We also define some
other systems similar to those [defined in] of \cite{system} and briefly describe
how all of these systems fit into what Fowler and Sims refer to in \cite{fowlersims}
as \emph{product systems taking values in tensor groupoids}.

This paper is organized as follows. In 
\secref{prelim}
we will present a brief
overview of some of the preliminaries on $C^*$-correspondences, graph 
systems of correspondences, and topological k-graph algebras.  In 
\secref{systems}
we will define two systems that closely resemble $\Lambda$-systems of
correspondences which we will call $\Lambda$-systems of homomorphisms and
$\Lambda$-systems of maps. The $\Lambda$-system of maps will be a generalization
of the notion of a Mauldin-Williams
graph.  After defining some more terminology, we prove
some basic facts about these systems and how they relate to one another.
In 
\secref{MW}
we define a k-graph analog of Mauldin-Williams graphs and prove
our main theorem which generalizes Ionescu's main result from [Ion07]. In
\secref{topological}
we prove that the Cuntz-Pimsner algebra of the correspondence associated
to any $\Lambda$-system of maps can be realized as the graph algebra of
a certain topological k-graph. In 
\secref{tensor groupoid}
we briefly describe how all of
the various $\Lambda$-systems fit in to the framework described by Fowler
and Sims in \cite{fowlersims}. 
In 
\secref{reverse}
we will examine the question of
which $\Lambda$-systems of correspondences arise from the other types of
$\Lambda$-systems described here.
Finally, in \secref{no go} we show that, perhaps disappointingly, the higher-rank Mauldin-Williams graphs of \secref{MW} do not give rise to any new ``higher-rank fractals''.

\section{Preliminaries}
\label{prelim}

\subsection{Correspondences}
For $C^*$-algebras $A$ and $B$,
we usually want our
$A-B$ correspondences $X$
to be
\emph{nondegenerate}
in the sense that
$A\cdot X=X$, equivalently, the left-module homomorphism $\varphi_A:A\to \LL_B(X)=M(\KK_B(X))$ is nondegenerate.
If $\varphi:A\to M(B)$ is a nondegenerate homomorphism, the \emph{standard $A-B$ correspondence ${}_\varphi B$} is $B$ viewed as a Hilbert module in the usual way and equipped with the left $A$-module structure induced by $\varphi$.
The \emph{identity} $B$-correspondence is ${}_{\id}B$.

An
\emph{isomorphism} of an $A-B$ correspondence $X$ onto a $C-D$ correspondence $Y$
is a triple $(\theta_A,\theta,\theta_B)$,
where
\begin{itemize}
\item $\theta_A:A\to C$ and $\theta_B:B\to D$ are $C^*$-isomorphisms, and
\item $\theta:X\to Y$ is a linear bijection such that
\begin{align*}
&\<\theta(\xi),\theta(\eta)\>_D=\theta_B(\<\xi,\eta\>_B)
\righttext{for all}\xi,\eta\in X;\\
&\theta(a\cdot \xi\cdot b)=\theta_A(a)\cdot \theta(\xi)\cdot \theta_B(b)\righttext{for all}a\in A,\xi\in X,b\in B.
\end{align*}
\end{itemize}
$\theta_A$ and $\theta_B$ are the \emph{left-} and \emph{right-hand} \emph{coefficient isomorphisms}, respectively.
When both $X$ and $Y$ are $A-B$ correspondences,
we require, unless otherwise specified, that
the coefficient isomorphisms be the identity maps,
and we sometimes emphasize that we are making this assumption by saying that
$\theta:X\to Y$ is an \emph{$A-B$ correspondence isomorphism}.

Recall from \cite[Proposition~2.3]{taco} that for two nondegenerate homomorphisms $\varphi,\psi:A\to M(B)$,
the standard $A-B$ correspondences ${}_\varphi B$ and ${}_\psi B$ are isomorphic
if and only if there is a unitary multiplier $u\in M(B)$ such that $\psi=\ad u\circ\varphi$
(the special case of imprimitivity bimodules is essentially
\cite[Proposition~3.1]{bgr}).
In particular, if $B$ is commutative then ${}_\varphi B\cong {}_\psi B$ if and only if $\varphi=\psi$.

\subsection{$\Lambda$-systems}
Throughout, $\Lambda$ will be a row-finite $k$-graph with no sources, so that the associated Cuntz-Krieger relations take the most elementary form.
In \cite{system}, Deaconu, Kumjian, Pask, and Sims introduced
\emph{$\Lambda$-systems}
of correspondences:
we have a Banach bundle $X\to \Lambda$ with fibres $\{X_\lambda\}_{\lambda\in\Lambda}$ such that
\begin{enumerate}
\item for each $v\in \Lambda^0$, $X_v$ is a $C^*$-algebra;

\item for each $\lambda\in u\Lambda v$, $X_\lambda$ is an $X_u-X_v$ correspondence;

\item there is a partially-defined associative multiplication on $X=\bigsqcup_{\lambda\in \Lambda}X_\lambda$ that is compatible with the multiplication in $\Lambda$ via the bundle projection $X\to \Lambda$;

\item whenever $\lambda,\mu\notin \Lambda^0$ and $s(\lambda)=r(\mu)$, $x\otimes y\mapsto xy:X_\lambda\otimes_{A_{s(\lambda)}} X_\mu\to X_{\lambda\mu}$ is an isomorphism of $A_{r(\lambda)}-A_{s(\mu)}$ correspondences;

\item the left and right module multiplications of the correspondences coincide with the multiplication from the $\Lambda$-system.
\end{enumerate}
For a $\Lambda$-system $X$ of correspondences, we will write
\[
\varphi_\lambda:X_u\to \LL(X_\lambda)\righttext{for}\lambda\in u\Lambda
\]
for the left-module structure map.
Note that the multiplication in $X$
induces
$X_u-X_v$ correspondence isomorphisms
$X_\lambda\otimes X_v\cong X_\lambda$
for all $\lambda\in u\Lambda v$,
but only induces isomorphisms
$X_u\otimes X_\lambda\cong X_\lambda$
if every correspondence $X_\lambda$ is nondegenerate.

Given $\Lambda$-systems $X$ and $Y$ of correspondences,
a map $\theta:X\to Y$ is
a \emph{$\Lambda$-system isomorphism}
if
\begin{enumerate}
\item for all $\lambda\in u\Lambda v$,
\[
\theta_\lambda:=\theta|_{X_\lambda}:X_\lambda\to Y_\lambda
\]
is an isomorphism of correspondences with coefficient isomorphisms $\theta_u,\theta_v$;

\item for all $\lambda\in u\Lambda v,\mu\in v\Lambda w$,
\[
\theta_\lambda(\xi)\theta_\mu(\eta)=\theta_{\lambda\mu}(\xi\eta)\righttext{for all}\xi\in X_\lambda,\eta\in X_\mu.
\]
\end{enumerate}
Since the multiplication in the $\Lambda$-system induces the left and right module multiplications for the correspondences, in the above we can relax (1) to
\begin{enumerate}
\item[$(1)'$]
for all $\lambda\in \Lambda v$, $\theta_\lambda:X_\lambda\to Y_\lambda$ is a linear bijection satisfying
\begin{equation*}
\<\theta_\lambda(\xi),\theta_\lambda(\eta)\>_{Y_v}=\theta_v(\<\xi,\eta\>_{X_v})\righttext{for all}\xi,\eta\in X_\lambda,
\end{equation*}
\end{enumerate}
because (2) takes care of the coefficient maps.
We emphasize that we're requiring that,
for each $v\in\Lambda^0$,
$\theta_v$ be the right-hand coefficient isomorphism
for every correspondence isomorphism $\theta_\lambda$ with $s(\lambda)=v$,
and also the left-hand coefficient isomorphism
for every correspondence isomorphism $\theta_\lambda$ with $r(\lambda)=v$.
Thus, if $X$ and $Y$ are isomorphic $\Lambda$-systems of correspondences, then 
without loss of generality we may assume (if we wish) that $X_v=Y_v$
and $\theta_v=\id_{X_v}$ for every vertex $v$,
so that
$\theta_\lambda:X_\lambda\to Y_\lambda$ 
is an $X_u-X_v$ correspondence isomorphism 
whenever $\lambda\in u\Lambda v$.

\subsection{Topological $k$-graphs}
Recall \cite{yeend:topk-graph}
that a \emph{topological $k$-graph}
is a $k$-graph $\Gamma$ equipped with a locally compact Hausdorff topology
making the multiplication continuous and open,
the range map continuous,
the source map a local homeomorphism,
and the degree functor $d:\Gamma\to \N^k$ continuous.
Carlsen, Larsen, Sims, and Vittadello show in \cite[Proposition~5.9]{carlsen} that every topological $k$-graph $\Gamma$ gives rise to a $\N^k$-system $Z$ of correspondences over $A:=C_0(\Gamma^0)$
as follows:
For each $n\in\N^k$ let $Z_n$ be the $A$-correspondence
associated to the topological graph
$(\Gamma^0,\Gamma^n,s|_{\Gamma^n},r|_{\Gamma^n})$,
so that $Z_n$ is the completion of the pre-correspondence $C_c(\Gamma^n)$ with operations
\begin{align*}
&(f\cdot \xi\cdot g)(\alpha)=f(r(\alpha))\xi(\alpha)g(s(\alpha))\\
&\<\xi,\eta\>_A(v)=\sum_{\alpha\in \Gamma^nv}\bar{\xi(\alpha)}\eta(\alpha),
\end{align*}
for $\xi,\eta\in C_c(\Gamma^n)$, $f,g\in A$.
Then for $\xi\in C_c(\Gamma^n),\eta\in C_c(\Gamma^m)$ define $\xi\eta\in C_c(\Gamma^{n+m})$ by
\[
(\xi\eta)(\alpha\beta)=\xi(\alpha)\eta(\beta)\righttext{for}\alpha\in \Gamma^n,\beta\in \Gamma^m,s(\alpha)=r(\beta).
\]
In \cite{yeend:topk-graph}, Yeend defined $C^*(\Gamma)$ using a groupoid model,
but \cite[Theorem~5.20]{carlsen} shows that $C^*(\Gamma)\cong \NN\OO_Z$,
where $\NN\OO_Z$ is the Cuntz-Nica-Pimsner algebra of the product system $Z$.
The topological $k$-graphs we encounter in this paper will be nice enough that $\NN\OO_Z$ will coincide with the Cuntz-Pimsner algebra $\OO_Z$.

\section{Other $\Lambda$-systems}\label{systems}

We introduce a few constructions that are similar to $\Lambda$-systems of correspondences:
\begin{defn}
\begin{enumerate}
\item
A \emph{$\Lambda$-system of homomorphisms} is a 
pair $(\AA,\varphi)$, where 
$\AA\to \Lambda^0$ is a $C^*$-bundle
and for each $\lambda\in u\Lambda v$ we have a
nondegenerate homomorphism $\varphi_\lambda:A_u\to M(A_v)$,
such that
\begin{align*}
&\varphi_{\mu}\circ\varphi_{\lambda}=\varphi_{\lambda\mu}\righttext{if}s(\lambda)=r(\mu)\\
&\varphi_v=\id_{A_v}\righttext{for}v\in \Lambda^0,
\end{align*}
where we have canonically extended $\varphi_\mu$ to $M(A_v)$.

\item
A \emph{$\Lambda$-system of maps} is a 
pair $(T,\sigma)$, where 
$T\to \Lambda^0$ is a bundle 
of locally compact Hausdorff spaces 
and for each $\lambda\in u\Lambda v$ we have a
continuous map $\sigma_\lambda:T_v\to T_u$,
such that
\begin{align*}
&\sigma_{\lambda}\circ\sigma_{\mu}=\sigma_{\lambda\mu}\righttext{if}s(\lambda)=r(\mu)\\
&\sigma_v=\id_{T_v}\righttext{for}v\in \Lambda^0.
\end{align*}
\end{enumerate}
\end{defn}

\begin{rem}\label{processes}
\begin{enumerate}
\item
Note that we need to impose the nondegeneracy condition on the homomorphisms $\varphi_\lambda$ so that composition is defined.

\item
Thus, a $\Lambda$-system of homomorphisms is essentially a contravariant functor from $\Lambda$ to the category of $C^*$-algebras and nondegenerate homomorphisms into multiplier algebras,
and a $\Lambda$-system of maps is essentially a (covariant) functor from $\Lambda$ to the category of locally compact Hausdorff spaces and continuous maps.

\item
Every $\Lambda$-system $(T,\sigma)$ of maps gives rise to a $\Lambda$-system $(\AA,\sigma^*)$ of homomorphisms,
with
\begin{align*}
&A_v=C_0(T_v)\righttext{for}v\in \Lambda^0\\
&\sigma_\lambda^*(f)=f\circ\sigma_\lambda\righttext{for}\lambda\in \Lambda,f\in A_{r(\lambda)}.
\end{align*}

\item
Every $\Lambda$-system $(\AA,\varphi)$ of homomorphisms gives rise to a $\Lambda$-system of correspondences:
for $\lambda\in u\Lambda v$ let $X_\lambda$ be the 
standard
$A_u-A_v$ correspondence ${}_{\varphi_\lambda}A_v$.
\end{enumerate}
\end{rem}

\begin{defn}
We call a $\Lambda$-system of maps $(T,\sigma)$
\begin{enumerate}
\item
\emph{proper} if each map $\sigma_\lambda:T_{s(\lambda)}\to T_{r(\lambda)}$ is proper (in the usual sense that inverse images of compact sets are compact), and

\item
\emph{dense} if each map $\sigma_\lambda:T_{s(\lambda)}\to T_{r(\lambda)}$ has dense range.
\end{enumerate}
\end{defn}

\begin{defn}
We call a 
$C^*$-homomorphism $\varphi:A\to M(B)$ \emph{proper} if it maps into $B$
(and we will also denote it by $\varphi:A\to B$).
\end{defn}

\begin{rem}
A nondegenerate homomorphism 
$\varphi:A\to M(B)$ is 
proper in the above sense if and only if $\varphi$ takes one (hence every) bounded approximate identity for $A$ to an approximate identity for $B$.
Also, if 
$\sigma:Y\to X$ is a continuous map,
then $\sigma^*:C_0(X)\to M(C_0(Y))$
is automatically nondegenerate, and
is proper if and only if $\sigma$ is proper.
\end{rem}

\begin{defn}
Let $X$ an $A-B$ correspondence, with left module map $\varphi_A:A\to \LL(X)=M(\KK(X))$.
We call $X$
\emph{proper}, \emph{nondegenerate}, or  \emph{faithful} if
$\varphi_A$ has the corresponding property.
\end{defn}

\begin{defn}
We call a $\Lambda$-system $(\AA,\varphi)$ of homomorphisms
\emph{proper} or \emph{faithful}
if each homomorphism $\varphi_\lambda$ has the corresponding property.
\end{defn}

\begin{defn}
We call a $\Lambda$-system $X$ of correspondences
\emph{proper}, \emph{nondegenerate}, \emph{full}, or \emph{faithful}
if each correspondence $X_\lambda$ has the corresponding property.
\end{defn}

\cite{system} calls a $\Lambda$-system $X$ of correspondences
\emph{regular}
if it is proper, nondegenerate, full, and faithful.
However, we believe that the fidelity is too much to ask, both for aesthetic and practical reasons.

Let $X$ be a $\Lambda$-system of correspondences,
and let $A=\bigoplus_{v\in \Lambda^0}X_v$ be the $c_0$-direct sum of $C^*$-algebras.
Then each $X_\lambda$ may be regarded as an $A$-correspondence.
For each $n\in\N^k$, \cite{system} defines an $A$-correspondence $Y_n$ by
\[
Y_n=\bigoplus_{\lambda\in \Lambda^n}X_\lambda,
\]
and \cite[Proposition~3.17]{system} shows that $Y=Y_X=\bigsqcup_{n\in\N^k}Y_n$ is an $\N^k$-system (i.e., a product system over $\N^k$) of $A$-correspondences,
and moreover if $X$ is regular then so is $Y$.
We will identify $X_\lambda$ with its canonical image in $Y_{d(\lambda)}$,
i.e., we will blur the distinction between the external and internal direct sums of the $A$-correspondences $\{X_\lambda:\lambda\in \Lambda^n\}$.

\begin{defn}
We call a $\Lambda$-system $(T,\sigma)$ of maps
\begin{enumerate}
\item
\emph{$k$-dense}
if for all $v\in \Lambda^0$ and $n\in\N^k$,
\[
T_v=\overline{\bigcup_{\lambda\in v\Lambda^n}\sigma_\lambda(T_{s(\lambda)})},
\]
and

\item
\emph{$k$-regular}
if it is proper and $k$-dense.
\end{enumerate}
\end{defn}

Here is a minor strengthening of $k$-density that we will find useful later:

\begin{defn}
A $\Lambda$-system of maps $(T,\sigma)$ is \emph{$k$-surjective} if
\[
T_v=\bigcup_{\lambda\in v\Lambda^n}\sigma_\lambda\bigl(T_{s(\lambda)}\bigr)\righttext{for all}v\in\Lambda^0,n\in\N^k.
\]
\end{defn}

\begin{defn}
We call a $\Lambda$-system $(\AA,\varphi)$ of homomorphisms
\begin{enumerate}
\item
\emph{$k$-faithful}
if for all $v\in \Lambda^0$ and $n\in\N^k$,
\[
\bigcap_{\lambda\in v\Lambda^n}\ker\varphi_\lambda=\{0\},
\]
and

\item
\emph{$k$-regular}
if it is proper and $k$-faithful.
\end{enumerate}
\end{defn}

\begin{defn}\label{k-faithful}
We call a $\Lambda$-system $X$ of correspondences
\begin{enumerate}
\item
\emph{$k$-faithful}
if the associated $\N^k$-system $Y_X$
is faithful,
and

\item
\emph{$k$-regular}
if it is proper, nondegenerate, full, and $k$-faithful.
\end{enumerate}
\end{defn}

\begin{rem}
\begin{enumerate}
\item
If $(T,\sigma)$ is a $\Lambda$-system of maps,
then the associated $\Lambda$-system $(\AA,\sigma^*)$ of homomorphisms is:
\begin{itemize}
\item
proper if and only if $(T,\sigma)$ is,
and

\item
faithful if and only if $(T,\sigma)$ is dense.
\end{itemize}

\item
If $(\AA,\varphi)$ is a $\Lambda$-system of homomorphisms,
then the associated $\Lambda$-system $X$ of correspondences is:
\begin{itemize}
\item
automatically nondegenerate and full,
and

\item
proper or faithful
if and only if $(\AA,\varphi)$ has the corresponding property.
\end{itemize}

\item
We have organized our definitions so that a $\Lambda$-system $X$ of correspondences is $k$-regular if and only if the associated $\N^k$-system $Y_X$ is regular.
\end{enumerate}
\end{rem}

We will need the following variation on the above:

\begin{lem}
\begin{enumerate}
\item
If $(T,\sigma)$ is a $\Lambda$-system of maps,
then the associated $\Lambda$-system $(\AA,\sigma^*)$ of homomorphisms is
$k$-faithful if and only if $(T,\sigma)$ is $k$-dense,
and consequently is $k$-regular if and only if $(T,\sigma)$ is.

\item
If $(\AA,\varphi)$ is a $\Lambda$-system of homomorphisms,
then the associated $\Lambda$-system $X$ of correspondences is
$k$-faithful if and only if $(\AA,\varphi)$ is,
and consequently is $k$-regular if and only if $(X,\varphi)$ is.
\end{enumerate}
\end{lem}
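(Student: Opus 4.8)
The plan is to prove the two parts separately, in each case reducing the relevant $k$-faithfulness condition on the target system to a fibrewise statement and then combining it with the properness equivalences already recorded in the preceding remark to obtain the $k$-regular assertion.

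For part (1), I would fix $v\in\Lambda^0$ and $n\in\N^k$. The first step is to identify, for $\lambda\in v\Lambda^n$, the kernel of $\sigma_\lambda^*\colon C_0(T_v)\to M(C_0(T_{s(\lambda)}))$: since $\sigma_\lambda^*(f)=f\circ\sigma_\lambda$ vanishes exactly when $f$ vanishes on the range $\sigma_\lambda(T_{s(\lambda)})$, and continuity of $f$ lets us pass to the closure, $\ker\sigma_\lambda^*$ is the ideal of functions vanishing on $\overline{\sigma_\lambda(T_{s(\lambda)})}$. Intersecting over $\lambda\in v\Lambda^n$ then shows that $\bigcap_{\lambda\in v\Lambda^n}\ker\sigma_\lambda^*$ is precisely the set of $f\in C_0(T_v)$ vanishing on $U:=\overline{\bigcup_{\lambda\in v\Lambda^n}\sigma_\lambda(T_{s(\lambda)})}$. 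The key analytic step is a Urysohn-type argument: because $T_v$ is locally compact Hausdorff and $U$ is closed, no nonzero element of $C_0(T_v)$ vanishes on $U$ if and only if $U=T_v$. Thus the intersection is trivial for all $v,n$ exactly when $(T,\sigma)$ is $k$-dense, giving the $k$-faithful iff $k$-dense equivalence. The ``consequently'' clause then follows by combining this with the remark's statement that $(\AA,\sigma^*)$ is proper iff $(T,\sigma)$ is, since $k$-regularity in both settings is properness together with the faithfulness/density condition just matched.

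For part (2), the plan is to compute the left action of $A=\bigoplus_{v}A_v$ on $Y_n=\bigoplus_{\lambda\in\Lambda^n}X_\lambda$ explicitly. Writing $a=(a_v)_v\in A$, the operator $\Phi_n(a)\in\LL(Y_n)$ is diagonal, acting on the summand $X_\lambda={}_{\varphi_\lambda}A_{s(\lambda)}$ by the left-multiplier operator $\varphi_\lambda(a_{r(\lambda)})$. Under the identification $\LL(X_\lambda)=M(A_{s(\lambda)})$ the left-action map of the standard correspondence $X_\lambda$ is $\varphi_\lambda$ itself, so this operator vanishes iff $a_{r(\lambda)}\in\ker\varphi_\lambda$. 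Since a diagonal operator on the external direct sum is zero iff each diagonal entry is, we obtain $\ker\Phi_n=\bigoplus_v\bigcap_{\lambda\in v\Lambda^n}\ker\varphi_\lambda$, which is trivial iff each fibre intersection is trivial. Faithfulness of every $Y_n$ (that is, $k$-faithfulness of $X$) is therefore equivalent to $\bigcap_{\lambda\in v\Lambda^n}\ker\varphi_\lambda=\{0\}$ for all $v,n$, i.e.\ to $k$-faithfulness of $(\AA,\varphi)$. The ``consequently'' clause follows because the remark guarantees the associated $X$ is automatically nondegenerate and full and is proper iff $(\AA,\varphi)$ is.

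I expect both directions to be essentially bookkeeping, so the only genuine content is the Urysohn argument in part (1) and the kernel decomposition in part (2). The step most in need of care will be the latter: one must verify both that the fibre components of $a$ act independently on the distinct summands $X_\lambda$ and that vanishing of $\varphi_\lambda(a_{r(\lambda)})$ as an adjointable operator is equivalent to $a_{r(\lambda)}\in\ker\varphi_\lambda$, after which the $c_0$-direct-sum description of $\ker\Phi_n$ and its triviality criterion fall out.
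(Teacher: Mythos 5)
Your proposal is correct and takes essentially the same route as the paper: part (1) rests on the same Urysohn-type dichotomy (a nonzero $f\in C_0(T_v)$ vanishing on the closed union of the ranges exists iff that union fails to be dense), and part (2) is the same coordinatewise computation of the left action of $A$ on $Y_n=\bigoplus_{\lambda\in\Lambda^n}X_\lambda$, which the paper performs by testing against tuples supported on a single summand $X_\lambda$. Your packaging --- identifying $\bigcap_{\lambda\in v\Lambda^n}\ker\sigma_\lambda^*$ as a vanishing ideal and $\ker\Phi_n$ as a $c_0$-direct sum of the fibre intersections $\bigcap_{\lambda\in v\Lambda^n}\ker\varphi_\lambda$ --- is a slightly slicker organization of the paper's two contrapositive directions, not a genuinely different argument.
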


\begin{proof}
(1).
This is routine, but we present the details for completeness.
First assume that $(T,\sigma)$ is not $k$-dense.
We will show that $(\AA,\sigma^*)$ is not $k$-faithful.
We can choose $v\in \Lambda^0$ and $n\in\N^k$ such that $\bigcup_{\lambda\in v\Lambda^n}\sigma_\lambda(T_{s(\lambda)})$ is not dense in $T_v$.
We will show that $\bigcap_{\lambda\in v\Lambda^n}\ker\sigma^*_\lambda\ne \{0\}$.
We can choose a nonzero $f\in C_0(T_v)$ that vanishes on $\bigcup_{\lambda\in v\Lambda^n}\sigma_\lambda(T_{s(\lambda)})$.
Then for all $\lambda\in v\Lambda^n$ and all $g\in C_0(T_{s(\lambda)})$,
\[
\sigma_\lambda^*(f)g=(f\circ\sigma_\lambda)g=0.
\]
Thus $f\in \bigcap_{\lambda\in v\Lambda^n}\ker\sigma^*_\lambda$.

Conversely, assume that $(\AA,\sigma^*)$ is not $k$-faithful.
We will show that $(T,\sigma)$ is not $k$-dense.
We can choose $v\in \Lambda^0$ and $n\in\N^k$ such that $\bigcap_{\lambda\in v\Lambda^n}\ker\sigma_\lambda^*\ne \{0\}$.
We will show that $\bigcup_{\lambda\in v\Lambda^n}\sigma_\lambda(T_{s(\lambda)})$ is not dense in $T_v$.
Choose a nonzero $f\in \bigcap_{\lambda\in v\Lambda^n}\ker \sigma_\lambda^*$.
Then choose a nonempty open set $U\subset T_v$ such that $f(t)\ne 0$ for all $t\in U$.
We will show that
\[
U\cap \bigcup_{\lambda\in v\Lambda^n}\sigma_\lambda(T_{s(\lambda)})=\varnothing.
\]
Let $t\in \bigcup_{\lambda\in v\Lambda^n}\sigma_\lambda(T_{s(\lambda)})$,
and choose $\lambda\in v\Lambda^n$ and $s\in T_{s(\lambda)}$ such that $t=\sigma_\lambda(s)$.
Then choose $g\in C_0(T_{s(\lambda)})$ such that $g(s)=1$.
Since $f\in \ker\sigma_\lambda^*$,
\[
0=\bigl(\sigma_\lambda^*(f)g\bigr)(s)=f(\sigma_\lambda(s))g(s)=f(t),
\]
so $t\notin U$.

(2).
First assume that $(\AA,\varphi)$ is not $k$-faithful.
We will show that $X$ is not $k$-faithful.
We can choose $v\in \Lambda^0$ and $n\in\N^k$ such that $\bigcap_{\lambda\in v\Lambda^n}\ker\varphi_\lambda\ne \{0\}$.
We will show that the $A$-correspondence $Y_n$ is not faithful.
Choose a nonzero $a\in A_v$ 
such that $\varphi_\lambda(a)=0$ for all $\lambda\in v\Lambda^n$.
Let
\[
y=(x_\lambda)\in Y_n=\bigoplus_{\lambda\in \Lambda^n}X_\lambda.
\]
Then
$a\cdot y$ is the $\Lambda^n$-tuple $(a\cdot x_\lambda)$,
where for $\lambda\in \Lambda^n$ we have
\[
a\cdot x_\lambda=\begin{cases}\varphi_\lambda(a)x_\lambda\case r(\lambda)=v\\0\case r(\lambda)\ne v.\end{cases}
\]
Since $\varphi_\lambda(a)x_\lambda=0$ for all $\lambda\in v\Lambda^n,x_\lambda\in X_\lambda=A_{s(\lambda)}$, we have $a\cdot y=0$, and we have shown that 
$Y_n$ is not faithful.

Conversely, assume that
$X$ is not $k$-faithful.
We will show that $(\AA,\varphi)$ is not $k$-faithful.
We can choose $n\in\N^k$ such that the $A$-correspondence $Y_n$ is not faithful,
so we can find a nonzero $a\in A$ such that $a\cdot y=0$ for all $y\in Y_n$.
Then $a=(a_v)$ is a $\Lambda^0$-tuple with $a_v\in A_v$ for each $v$,
so we can choose $v\in \Lambda^0$ such that $a_v\ne 0$.
We will show that $a_v\in \bigcap_{\lambda\in v\Lambda^n}\ker \varphi_\lambda$.
Let $\lambda\in v\Lambda^0$ and $b\in A_{s(\lambda)}$.
Define a $v\Lambda^n$-tuple $(x_\mu)\in Y_n$ by
\[
x_\mu=\begin{cases}b\case \mu=\lambda\\ 0\case \mu\ne \lambda.\end{cases}
\]
Then
\[
\varphi_\lambda(a_v)b=\bigl(a_v\cdot (x_\lambda)\bigr)_\lambda=0.
\qedhere
\]
\end{proof}

\begin{rem}
The argument in the last paragraph of the above proof is a routine adaptation of that used in \cite[Proposition~3.1.7]{system}.
\end{rem}

Our motivation for introducing the properties of $k$-density and $k$-fidelity is that the Mauldin-Williams graphs considered by Ionescu ---
where we have a 1-graph $\Lambda$
whose 1-skeleton $E$ is finite,
a $\Lambda$-system $(T,\sigma)$ of maps
in which each space $T_v$ is a compact metric space
and each map $\sigma_\lambda$ is a (strict) contraction ---
are typically 1-dense in the above sense rather than dense.
More precisely, a Mauldin-Williams graph $(T,\sigma)$ is 
dense (in our terminology) if and only if
every map $\sigma_e$ (for $e\in E^1$) is surjective,
which is usually not the case,
and
1-dense if and only if
for all $v\in E^0$ we have
\[
\bigcup_{e\in vE^1}\sigma_e(T_{s(e)})=T(v),
\]
which is always the case
(after replacing the original spaces by an ``invariant list'').
Thus, since we want to consider a version of Ionescu's theorem for $k$-graphs,
we must allow the weakened property of $k$-fidelity (of \defnref{k-faithful}) rather than insisting upon fidelity.

\cite[Definition~3.2.1]{system} defines a \emph{representation} of a $\Lambda$-system $X$ in a $C^*$-algebra $B$ as a map $\rho:X\to B$ such that
\begin{enumerate}
\item for each $v\in \Lambda^0$, $\rho_v:X_v\to B$ is a $C^*$-homomorphism;

\item whenever $\xi\in X_\lambda,\eta\in X_\mu$,
\[
\rho_\lambda(\xi)\rho_\mu(\eta)=\begin{cases}
\rho_{\lambda\mu}(\xi\eta)\case s(\lambda)=r(\mu)\\
0\case \text{otherwise};
\end{cases}
\]

\item whenever $\xi\in X_\lambda,\eta\in X_\mu$,
if $d(\lambda)=d(\mu)$ then
\[
\rho_\lambda(\xi)^*\rho_\mu(\eta)=\begin{cases}
\rho_{s(\lambda)}(\<\xi,\eta\>_{X_{s(\lambda)}})\case \lambda=\mu\\
0\case \text{otherwise},
\end{cases}
\]
\end{enumerate}
and when $X$ is regular \cite{system} defines a representation $\rho$ to be \emph{Cuntz-Pimsner covariant} if
for all $v\in\Lambda^0$, $n\in\N^k$, and $a\in X_v$,
\begin{enumerate}
\setcounter{enumi}{3}
\item
\[
\rho_v(a)=\sum_{\lambda\in v\Lambda^n}\rho^{(\lambda)}(\varphi_\lambda(a)),
\]
\end{enumerate}
where $\rho^{(\lambda)}=\rho_\lambda^{(1)}:\KK(X_\lambda)\to B$ is the associated homomorphism.
Then \cite{system} defines a representation $\rho$ to be \emph{universal} if for every representation $\tau:X\to C$ there is a unique $C^*$-homomorphism
$\Phi=\Phi_\tau:B\to C$ such that $\Phi\circ\rho_\lambda=\tau_\lambda$ for all $\lambda\in\Lambda$,
and a Cuntz-Pimsner covariant representation to be \emph{universal} if it satisfies the above universality property for all Cuntz-Pimsner covariant representations.
Then \cite{system} points out that, by the nondegeneracy that is included in the regularity assumption,
(1)--(3) above can be replaced by the following set of conditions:
each $\rho_\lambda$ is a correspondence representation in $B$,
$\rho$ is multiplicative whenever this makes sense,
and $\rho_u$ and $\rho_v$ have orthogonal images for all $u\ne v\in\Lambda^0$.

For the $\N^k$-system $Y=Y_X$ associated to a regular $\Lambda$-system $X$,
\cite[Proposition~3.2.3]{system} shows that there is a bijection between the representations $\rho:X\to B$ and the representations $\psi:Y\to B$ such that
\[
\psi\circ\iota_\lambda=\rho_\lambda\righttext{for all}\lambda\in\Lambda.
\]
\emph{However, it is crucial for our results to note that the proof of \cite[Proposition~3.2.3]{system} only requires nondegeneracy of $Y$, not of $X$.}

\cite[Proposition~3.2.5]{system} shows that if $X$ is regular then a representation $\rho:X\to B$ is Cuntz-Pimsner covariant if and only if the associated representation $\psi:Y\to B$ is.
We turn this result into a \emph{definition}:

\begin{defn}\label{cov}
Let $X$ be a $k$-regular $\Lambda$-system of correspondences,
with associated $\N^k$-system $Y$,
and let $\rho:X\to B$ be a representation of $X$,
with associated representation $\psi:Y\to B$.
We define $\rho$ to be \emph{Cuntz-Pimsner covariant} if 
$\psi$ is,
in other words
\[
\sum_{\lambda\in v\Lambda^n}\rho^{(\lambda)}\circ\varphi_\lambda=\rho_v\righttext{for all}v\in\Lambda^0.
\]
\end{defn}

\begin{rem}\label{CP}
To reiterate, the only difference between \defnref{cov} and the definition of Cuntz-Pimsner covariance given in \cite[Definition~3.2.1]{system} is that in the latter the $\Lambda$-system $X$ is required to be regular, while we only require $k$-regularity.
In any event, \cite[Definition~3.2.7]{system} defines the $C^*$-algebra of a regular $\Lambda$-system $X$ to be the Cuntz-Pimsner algebra $\OO_Y$,
and in \cite[Corollary~3.2.6]{system} they notice that the representation $\rho^{j_Y}:X\to \OO_Y$ is a universal Cuntz-Pimsner covariant representation, where $j_Y:Y\to \OO_Y$ is the universal Cuntz-Pimsner covariant representation.

We emphasize that, even though we only require the $\Lambda$-system $X$ to be $k$-regular, the theory of \cite{system} carries over with no problems, as we've indicated.
\cite{system} uses the notation $C^*(A,X,\chi)$ for the $C^*$-algebra of $X$, but we'll write it as $\OO_X$.
If $\rho:X\to B$ is any Cuntz-Pimsner covariant representation, we'll write $\Phi_\rho:\OO_X\to B$ for the homomorphism whose existence is guaranteed by universality; \cite{system} would write it as $\Phi_{\rho,\pi}$, because they write $\pi$ for the restriction of $\rho$ to the $C^*$-bundle
$X|_{\Lambda^0}$ (and they write $A$ for this $C^*$-bundle, as well as for the section algebra $\bigoplus_{v\in\Lambda^0}X_v$ --- we reserve the name $A$ for this latter $C^*$-algebra).
\end{rem}

Note that since we assume that $\Lambda$ is row-finite with no sources,
the infinite-path space
$\Lambda^\infty$
is locally compact Hausdorff,
and is the disjoint union of the compact open subsets $\{v\Lambda^\infty\}_{v\in\Lambda^0}$.
We get a $\Lambda$-system of maps $(\Lambda^\infty,\tau)$,
where for $\lambda\in u\Lambda v$ the continuous map
\[
\tau_\lambda:v\Lambda^\infty\to u\Lambda^\infty
\]
is defined by $\tau_\lambda(x)=\lambda x$.
Moreover, this $\Lambda$-system is $k$-regular.
This system has the following properties:
if $\lambda\in u\Lambda v$ then $\tau_\lambda$ is a homeomorphism of $v\Lambda^\infty$ onto the compact open set
\[
\lambda\Lambda^\infty\subset u\Lambda^\infty,
\]
and consequently $\tau_\lambda^*$ is a surjection of $C(u\Lambda^\infty)$ onto $C(v\Lambda^\infty)$.

\begin{lem}
For each $u\in \Lambda^0$
and $\lambda\in u\Lambda$
let $p_\lambda=s_\lambda s_\lambda^*$,
the set
\[
D_u=\clspn\{p_\lambda:\lambda\in u\Lambda\}
\]
is a unital commutative $C^*$-subalgebra of $C^*(\Lambda)$, with unit $p_u$,
and the subalgebras $\{D_u\}_{u\in \Lambda^0}$ are pairwise orthogonal.
Moreover, if $D$ denotes the 
commutative $C^*$-subalgebra 
$\bigoplus_{u\in \Lambda^0}D_u$
of $C^*(\Lambda)$,
then there 
is an isomorphism $\theta:C_0(\Lambda^\infty)\to D$
that takes the characteristic function of $\lambda\Lambda^\infty=\{\lambda x:s(\lambda)=r(x)\}$ to $p_\lambda$
and
$C(u\Lambda^\infty)$ to $D_u$.
Finally, the diagram
\[
\xymatrix{
C(u\Lambda^\infty) \ar[r]^-{\tau_\lambda^*} \ar[d]_\theta
&C(v\Lambda^\infty) \ar[d]^\theta
\\
D_u \ar[r]_-{\ad s_\lambda^*}
&D_v
}
\]
commutes.
\end{lem}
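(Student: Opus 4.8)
The plan is to reduce everything to the single $k$-graph identity
\[
s_\lambda^* s_\mu = \sum_{(\alpha,\beta)\in \Lambda^{\min}(\lambda,\mu)} s_\alpha s_\beta^*,
\]
where $\Lambda^{\min}(\lambda,\mu)$ is the (finite, by row-finiteness) set of pairs $(\alpha,\beta)$ with $\lambda\alpha=\mu\beta$ and $d(\lambda\alpha)=d(\lambda)\vee d(\mu)$, together with the matching combinatorial identity for the cylinder sets $Z(\lambda):=\lambda\Lambda^\infty$, namely $Z(\lambda)\cap Z(\mu)=\bigsqcup_{(\alpha,\beta)\in\Lambda^{\min}(\lambda,\mu)}Z(\lambda\alpha)$. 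A direct computation from the first identity gives the product formula $p_\lambda p_\mu=\sum_{(\alpha,\beta)\in\Lambda^{\min}(\lambda,\mu)}p_{\lambda\alpha}$; since this expression is symmetric in $\lambda,\mu$ and is again a finite sum of generators $p_\nu$, it shows at once that the $p_\lambda$ commute and that $\spn\{p_\lambda:\lambda\in u\Lambda\}$ is already a $*$-subalgebra, so its closure $D_u$ is a commutative $C^*$-algebra. Each $p_\lambda$ is a projection since $p_\lambda^2=s_\lambda s_{s(\lambda)}s_\lambda^*=p_\lambda$; because $r(\lambda)=u$ gives $s_u s_\lambda=s_\lambda$, we get $p_u p_\lambda=p_\lambda$, and since $p_u=s_u$ this identifies the unit of $D_u$. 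Finally, every $d\in D_u$ satisfies $d=p_u d\,p_u\in s_u C^*(\Lambda)s_u$, so for $u\ne w$ the orthogonality $s_u s_w=0$ forces $D_u D_w=\{0\}$, justifying the internal direct sum $D=\bigoplus_{u}D_u$.

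To build $\theta$, I would recall that for a row-finite $k$-graph with no sources the cylinders $Z(\lambda)$ form a basis of compact open sets for $\Lambda^\infty$, so the characteristic functions $\{1_{Z(\lambda)}\}$ span a dense $*$-subalgebra of $C_0(\Lambda^\infty)$. The intersection identity shows $1_{Z(\lambda)}1_{Z(\mu)}=\sum_{(\alpha,\beta)}1_{Z(\lambda\alpha)}$, exactly mirroring the product formula for the $p_\lambda$, and $1_{Z(\lambda)}$ and $p_\lambda$ are both self-adjoint. Hence the assignment $1_{Z(\lambda)}\mapsto p_\lambda$ respects products and adjoints on the spanning sets, and by construction carries each $1_{Z(\mu)}$ with $\mu\in u\Lambda$ into $D_u$, so that $\theta(C(u\Lambda^\infty))=D_u$.

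The main obstacle is to see that this assignment extends to an isometric isomorphism, rather than merely a $*$-homomorphism of dense subalgebras; equivalently, that the only linear relations among the $p_\lambda$ are those forced by the cylinder combinatorics. I would handle this by refining any finite family $Z(\lambda_1),\dots,Z(\lambda_m)$ to a finite family of pairwise disjoint cylinders $Z(\nu_1),\dots,Z(\nu_N)$ with each $Z(\lambda_i)$ a disjoint union of some of the $Z(\nu_j)$ (possible because, extending all paths to a common degree, cylinders are closed under intersection and relative complement up to finite disjoint unions). For disjoint cylinders $\Lambda^{\min}(\nu_j,\nu_{j'})=\varnothing$, so the $p_{\nu_j}$ are mutually orthogonal projections, and they are nonzero because $\Lambda$ has no sources (each $\nu_j$ lies on an infinite path); thus $\spn\{p_{\nu_j}\}$ and $\spn\{1_{Z(\nu_j)}\}$ are both isometrically $\cong\C^N$, and $\theta$ is isometric on this finite-dimensional piece. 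Passing to the closure yields the isomorphism $\theta:C_0(\Lambda^\infty)\to D$. (Alternatively one may simply cite the standard identification of the diagonal subalgebra of a $k$-graph $C^*$-algebra with $C_0(\Lambda^\infty)$.)

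Finally, for the commuting square I would evaluate both composites on a generator $1_{Z(\mu)}$ with $\mu\in u\Lambda$. Since $\tau_\lambda$ maps $v\Lambda^\infty$ homeomorphically onto $Z(\lambda)$ and $\tau_\lambda^{-1}(Z(\lambda\alpha))=Z(\alpha)$, the intersection identity gives $\tau_\lambda^*(1_{Z(\mu)})=\sum_{(\alpha,\beta)\in\Lambda^{\min}(\lambda,\mu)}1_{Z(\alpha)}$, whence $\theta(\tau_\lambda^*(1_{Z(\mu)}))=\sum_{(\alpha,\beta)}p_\alpha$. On the other side, $\ad s_\lambda^*(p_\mu)=s_\lambda^*s_\mu s_\mu^* s_\lambda$; expanding $s_\lambda^*s_\mu$ and its adjoint with the $k$-graph identity and using that the second coordinates $\beta$ all share the degree $(d(\lambda)\vee d(\mu))-d(\mu)$, so that $s_\beta^* s_{\beta'}=\delta_{\beta\beta'}s_{s(\beta)}$, collapses the double sum to $\sum_{(\alpha,\beta)}p_\alpha$ as well. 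Thus $\theta\circ\tau_\lambda^*=\ad s_\lambda^*\circ\theta$ on the dense spanning set, hence everywhere, proving the diagram commutes.
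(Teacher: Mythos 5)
Your proof is correct, but it takes a more hands-on route than the paper, whose proof is a brief appeal to folklore: there, one observes that truncation makes the sets $\Lambda^n$ into an inverse system with limit $\Lambda^\infty$, that each fixed-degree subalgebra $D^n=\clspn\{p_\lambda:\lambda\in\Lambda^n\}$ is spanned by mutually orthogonal projections and so has spectrum $\Lambda^n$, and that Gelfand duality turns the inductive limit $D=\clspn\bigcup_n D^n$ into the inverse limit of the spectra; the action of $\theta$ on characteristic functions and the commuting square are left implicit. You instead define $\theta$ directly on the functions $1_{\lambda\Lambda^\infty}$ via the $\Lambda^{\min}$ structure constants and prove isometry by refining finitely many cylinders to a common degree $N$ (which exists since $\N^k$ is lattice-ordered). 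Note that your refinement is the paper's inverse system in disguise: the identity $p_\lambda=\sum_{\mu\in s(\lambda)\Lambda^{N-d(\lambda)}}p_{\lambda\mu}$ is exactly the inclusion $D^{d(\lambda)}\subset D^N$ dual to truncation. What your version buys is the explicit formula $\theta(1_{\lambda\Lambda^\infty})=p_\lambda$ and an actual verification of the intertwining diagram, neither of which the paper's sketch addresses; it also works fiberwise over vertices, so it is indifferent to whether $\Lambda^0$ is finite (the paper's sketch calls the $\Lambda^n$ finite, which fails for infinite vertex sets, though nothing essential breaks). Two points worth tightening: the nonvanishing of the $p_{\nu_j}$ in $C^*(\Lambda)$ is not literally the statement $Z(\nu_j)\ne\varnothing$ but the standard Kumjian--Pask fact that the universal Cuntz--Krieger family has nonzero partial isometries; and in collapsing the double sum for $\ad s_\lambda^*(p_\mu)$ you should record that $\beta=\beta'$ forces $\alpha=\alpha'$, by unique factorization applied to $\lambda\alpha=\mu\beta$.
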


\begin{proof}
This is probably folklore, at least for directed graphs, and in any case is standard:
truncation gives an inverse system $\{\Lambda^n,\tau_{m,n}\}$ of surjections among nonempty finite sets\footnote{with $\tau_{m,n}(\lambda)=\lambda(0,n)$ for $\lambda\in\Lambda^m$ and $n\le m$}, whose inverse limit is $\Lambda^\infty$,
and for each $n$ the commutative $C^*$-subalgebra $D^n:=\clspn\{p_\lambda:\lambda\in\Lambda^n\}$ of $C^*(\Lambda)$ has spectrum $\Lambda^n$, so the inductive limit $D=\clspn\{D^n:n\in\N^k\}$ has spectrum $\Lambda^\infty$.
\end{proof}

\begin{defn}
Let $(T,\sigma)$ be a $\Lambda$-system of maps.
A continuous map
$\Phi:\Lambda^\infty\to T$ 
is \emph{intertwining}
if
\[
\Phi\circ\tau_\lambda=\sigma_\lambda\circ\Phi\righttext{for all}\lambda\in\Lambda.
\]
We say $(T,\sigma)$
is \emph{self-similar} if there is a surjective intertwining map $\Phi:\Lambda^\infty\to T$.
\end{defn}

\begin{prop}\label{self}
Every self-similar $\Lambda$-system of maps $(T,\sigma)$ is $k$-surjective, and each space $T_v$ is compact.
\end{prop}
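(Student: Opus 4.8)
The plan is to extract both conclusions from the single surjective intertwining map $\Phi\colon\Lambda^\infty\to T$, using the unique factorization of infinite paths in a $k$-graph. The first thing I would record is that $\Phi$ is necessarily a bundle map over $\Lambda^0$, i.e.\ $\Phi(v\Lambda^\infty)\subseteq T_v$ for every $v\in\Lambda^0$. This is forced by the intertwining identity: for $\lambda\in u\Lambda v$ the right-hand side $\sigma_\lambda\circ\Phi$ only makes sense on $v\Lambda^\infty$ once $\Phi$ carries $v\Lambda^\infty$ into the domain $T_v$ of $\sigma_\lambda$. Granting this, surjectivity of $\Phi$ onto $T=\bigsqcup_v T_v$ refines at once to fiberwise surjectivity: each restriction $\Phi|_{v\Lambda^\infty}\colon v\Lambda^\infty\to T_v$ is onto.

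For the compactness assertion I would invoke the observation recorded just before the statement that, since $\Lambda$ is row-finite with no sources, each $v\Lambda^\infty$ is compact (indeed compact open in $\Lambda^\infty$). As $T_v=\Phi(v\Lambda^\infty)$ is the continuous image of a compact space under $\Phi|_{v\Lambda^\infty}$, and $T_v$ is Hausdorff, $T_v$ is compact. This settles the second claim.

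For $k$-surjectivity, fix $v\in\Lambda^0$ and $n\in\N^k$. The inclusion $\bigcup_{\lambda\in v\Lambda^n}\sigma_\lambda(T_{s(\lambda)})\subseteq T_v$ is automatic, since each $\sigma_\lambda$ with $\lambda\in v\Lambda^n$ maps $T_{s(\lambda)}$ into $T_{r(\lambda)}=T_v$. For the reverse inclusion, take $t\in T_v$ and, using fiberwise surjectivity, choose $x\in v\Lambda^\infty$ with $\Phi(x)=t$. Now I would apply the unique factorization of infinite paths: write $x=\lambda y$, where $\lambda=x(0,n)\in v\Lambda^n$ is the initial segment of degree $n$ and $y\in s(\lambda)\Lambda^\infty$ is the corresponding tail, so that $x=\tau_\lambda(y)$. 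Then
\[
t=\Phi(x)=\Phi\bigl(\tau_\lambda(y)\bigr)=\sigma_\lambda\bigl(\Phi(y)\bigr),
\]
and since $\Phi(y)\in T_{s(\lambda)}$ by the bundle property, $t\in\sigma_\lambda(T_{s(\lambda)})$. This gives the equality defining $k$-surjectivity.

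The argument is short and I expect no serious obstacle; the two points deserving care are the justification that $\Phi$ is fiber-preserving (hence fiberwise onto) and the bookkeeping in the factorization $x=x(0,n)\,y$, which is exactly what upgrades a point-level surjectivity statement into surjectivity over all of $v\Lambda^n$ simultaneously. Everything else reduces to the fact that a continuous image of a compact space is compact, together with the definitions of $\tau_\lambda$ and of $k$-surjectivity.
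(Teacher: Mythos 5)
Your proof is correct and takes essentially the same route as the paper's: both identify $T_v=\Phi(v\Lambda^\infty)$, obtain compactness as the continuous Hausdorff image of the compact set $v\Lambda^\infty$, and derive $k$-surjectivity from the intertwining identity together with the decomposition $v\Lambda^\infty=\bigcup_{\lambda\in v\Lambda^n}\lambda\Lambda^\infty$, of which your pointwise factorization $x=x(0,n)\,y$ is just the element-level form. The only cosmetic difference is that you spell out why $\Phi$ is fiber-preserving, a point the paper leaves implicit.
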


\begin{proof}
First, $T_v$ is compact because the intertwining property and surjectivity of $\Phi$ imply that $T_v=\Phi(v\Lambda^\infty)$, which is a continuous image of the compact set $v\Lambda^\infty$.
For the $k$-surjectivity, 
if $v\in\Lambda^0$ and $n\in\N^k$ then
\begin{align*}
T_v
&=\Phi(v\Lambda^\infty)
\\&=\Phi\left(\bigcup_{\lambda\in v\Lambda^n}\lambda\Lambda^\infty\right)
\\&=\bigcup_{\lambda\in v\Lambda^n}\Phi(\lambda\Lambda^\infty)
\\&=\bigcup_{\lambda\in v\Lambda^n}\sigma_\lambda\bigl(\Phi(s(\lambda)\Lambda^\infty)\bigr)
\\&=\bigcup_{\lambda\in v\Lambda^n}\sigma_\lambda(T_{s(\lambda)}).
\qedhere
\end{align*}
\end{proof}

\begin{defn}
Let $(T,\sigma)$ be a $\Lambda$-system of maps,
and let $S\subset T$ be locally compact.
For each $v\in\Lambda^0$ let $S_v=S\cap T_v$.
Suppose that 
\[
\sigma_\lambda(S_v)\subset S_u\righttext{whenever}\lambda\in u\Lambda v.
\]
Then $(S,\sigma|_S)$ is a \emph{$\Lambda$-subsystem} of $(T,\sigma)$,
where
\[
(\sigma|_S)_\lambda=\sigma_\lambda|_{S_v}\righttext{for all}\lambda\in \Lambda_v.
\]
\end{defn}
Note that our terminology makes sense: every $\Lambda$-subsystem is in fact a $\Lambda$-system.

\begin{prop}\label{T' self}
Let $(T,\sigma)$ be a $\Lambda$-system of maps,
and let $\Phi:\Lambda^\infty\to T$ be an intertwining map.
Put
\begin{align*}
&T'_v=\Phi(v\Lambda^\infty)\righttext{for each}v\in\Lambda^0\\
&T'=\bigcup_{v\in\Lambda^0}T'_v.
\end{align*}
Then
$(T',\sigma|_{T'})$ is a self-similar $k$-surjective $\Lambda$-subsystem of $(T,\sigma)$, and each $T'_v$ is compact.
\end{prop}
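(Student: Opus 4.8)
The plan is to verify the required properties in turn, leaning on \propref{self} to dispatch two of them at once. First I would check that $(T',\sigma|_{T'})$ is a genuine $\Lambda$-subsystem; then I would observe that $\Phi$ itself, with its codomain cut down to $T'$, is a surjective intertwining map, so that $(T',\sigma|_{T'})$ is self-similar; and finally I would invoke \propref{self} to conclude both $k$-surjectivity and compactness of each fibre.

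For the subsystem property, two things must be checked: that $T'$ is locally compact, and that $\sigma_\lambda(T'_v)\subset T'_u$ whenever $\lambda\in u\Lambda v$. Local compactness is immediate once one notes that each $T'_v=\Phi(v\Lambda^\infty)$ is a continuous image of the compact set $v\Lambda^\infty$, hence compact, and that $T'_v=T'\cap T_v$ is relatively clopen in $T'$ because the fibres $T_v$ are clopen in the bundle $T$ over the discrete base $\Lambda^0$; thus $T'$ is a disjoint union of compact Hausdorff spaces. The containment is a one-line consequence of the intertwining identity: for $\lambda\in u\Lambda v$,
\[
\sigma_\lambda(T'_v)=\sigma_\lambda\bigl(\Phi(v\Lambda^\infty)\bigr)=\Phi\bigl(\tau_\lambda(v\Lambda^\infty)\bigr)=\Phi(\lambda\Lambda^\infty)\subset\Phi(u\Lambda^\infty)=T'_u,
\]
using $\tau_\lambda(v\Lambda^\infty)=\lambda\Lambda^\infty\subset u\Lambda^\infty$. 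This also records, incidentally, that each $T'_v$ is compact, giving the final assertion directly.

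For self-similarity, I would take the map $\Psi\colon\Lambda^\infty\to T'$ obtained from $\Phi$ by restricting the codomain to $T'$. It is well defined and surjective, since $T'=\bigcup_{v\in\Lambda^0}\Phi(v\Lambda^\infty)=\Phi(\Lambda^\infty)$, and it remains continuous because $T'$ carries the subspace topology. The intertwining identity for $\Psi$ with respect to $\sigma|_{T'}$ is inherited verbatim from that of $\Phi$, since $(\sigma|_{T'})_\lambda$ is merely the restriction $\sigma_\lambda|_{T'_v}$. Hence $(T',\sigma|_{T'})$ is self-similar, and \propref{self} then yields both that it is $k$-surjective and that each $T'_v$ is compact. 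The only points requiring any care are the bookkeeping ones --- confirming that $T'$ qualifies as a locally compact subset in the sense of the $\Lambda$-subsystem definition, and that restricting the codomain of $\Phi$ does not disturb the intertwining relation --- neither of which presents a real obstacle.
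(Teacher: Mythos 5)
Your proposal is correct and follows essentially the same route as the paper's proof: compactness of each $T'_v$ as a continuous Hausdorff image of the compact set $v\Lambda^\infty$ (whence local compactness of $T'$ from the disjoint open fibres), the subsystem containment $\sigma_\lambda(T'_v)\subset T'_u$ via the intertwining identity, self-similarity because $\Phi$ maps $\Lambda^\infty$ onto $T'$ by construction, and an appeal to \propref{self} for $k$-surjectivity. The extra bookkeeping you supply (e.g.\ the explicit step $\tau_\lambda(v\Lambda^\infty)=\lambda\Lambda^\infty$) is harmless elaboration of the same argument.
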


\begin{proof}
First of all, each $T'_v$ is compact 
since $v\Lambda^\infty$ is compact and $T_v$ is Hausdorff.
Thus
$T'$
is locally compact, since the sets $T_v$ are pairwise disjoint and open.
For each $\lambda\in u\Lambda v$ we have
\begin{align*}
\sigma_\lambda(T'_v)
&=\sigma_\lambda(\Phi(v\Lambda^\infty))
\\&=\Phi(\lambda\Lambda^\infty)
\\&\subset \Phi(u\Lambda^\infty)
\\&=T'_u,
\end{align*}
so $(T',\sigma|_{T'})$ is a $\Lambda$-subsystem of $(T,\sigma)$.
It is self-similar because $\Phi$ is intertwining and maps $\Lambda^\infty$ onto $T'$ by construction.
Then by \propref{self} $(T',\sigma|_{T'})$ is $k$-surjective.
\end{proof}


\begin{thm}\label{main}
Let $(T,\sigma)$ be a self-similar $k$-regular $\Lambda$-system of maps,
and let $X$ be the associated $\Lambda$-system of correspondences.
Then
\[
\OO_X\cong C^*(\Lambda).
\]
\end{thm}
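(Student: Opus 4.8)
The plan is to construct an explicit Cuntz–Pimsner covariant representation $\rho:X\to C^*(\Lambda)$, to show that the induced homomorphism $\Phi_\rho:\OO_X\to C^*(\Lambda)$ is surjective, and then to invoke gauge-invariant uniqueness to get injectivity. Let $\{s_\lambda\}$ be the universal Cuntz–Krieger $\Lambda$-family generating $C^*(\Lambda)$, write $p_\lambda=s_\lambda s_\lambda^*$, and let $\theta:C_0(\Lambda^\infty)\to D$ and $D_v=\clspn\{p_\alpha:\alpha\in v\Lambda\}$ be as in the preceding lemma. By self-similarity fix a surjective intertwining map $\Phi:\Lambda^\infty\to T$; by \propref{self} each $T_v$ is compact, so each $A_v=C(T_v)$ is unital. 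Writing $\Phi_v:v\Lambda^\infty\to T_v$ for the restriction, I would set $\rho_v=\theta\circ\Phi_v^*:A_v\to D_v$, which is an injective $*$-homomorphism (because $\Phi_v$ is onto and $\theta$ is an isomorphism) with pairwise orthogonal ranges $\{D_v\}$; and for $\lambda\in u\Lambda v$, $\xi\in X_\lambda=A_v$, define $\rho_\lambda(\xi)=s_\lambda\rho_v(\xi)$.

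The crux is the intertwining identity
\[
\rho_v(\xi)\,s_\lambda=s_\lambda\,\rho_w\bigl(\sigma_\lambda^*(\xi)\bigr)\righttext{for}\lambda\in v\Lambda w,\ \xi\in A_v.
\]
It comes from three ingredients: the commuting square of the preceding lemma, $\theta\circ\tau_\lambda^*=\ad s_\lambda^*\circ\theta$; the pullback of $\Phi_v\circ\tau_\lambda=\sigma_\lambda\circ\Phi_w$, namely $\tau_\lambda^*\circ\Phi_v^*=\Phi_w^*\circ\sigma_\lambda^*$; and the elementary observations that $p_\lambda\in D_v$, that $D_v$ is commutative, and that $s_\lambda=p_\lambda s_\lambda$, whence $d\,s_\lambda=p_\lambda d\,s_\lambda$ for every $d\in D_v$. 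The first two give $s_\lambda^*\rho_v(\xi)s_\lambda=\rho_w(\sigma_\lambda^*(\xi))$, and multiplying on the left by $s_\lambda$ and using $\rho_v(\xi)s_\lambda=p_\lambda\rho_v(\xi)s_\lambda$ yields the display. Granting this, the representation relations are routine: the inner-product relation $\rho_\lambda(\xi)^*\rho_\mu(\eta)=\rho_{s(\lambda)}(\<\xi,\eta\>)$ for $d(\lambda)=d(\mu)$ uses $s_\lambda^*s_\mu=\delta_{\lambda\mu}p_{s(\lambda)}$ and that $p_v$ is the unit of $D_v$; multiplicativity $\rho_\lambda(\xi)\rho_\mu(\eta)=\rho_{\lambda\mu}(\xi\eta)$ (and $=0$ unless $s(\lambda)=r(\mu)$) uses the identity to slide $\rho_v(\xi)$ past $s_\mu$ together with the description $\xi\eta=\sigma_\mu^*(\xi)\eta$ of the multiplication in the standard correspondences of \remref{processes}; and orthogonality of $\{D_v\}$ handles the vertex conditions.

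Next I would verify Cuntz–Pimsner covariance in the sense of \defnref{cov}. Identifying $\KK(X_\lambda)\cong A_w$ (as $A_w$ is unital and commutative), the associated homomorphism satisfies $\rho^{(\lambda)}(b)=s_\lambda\rho_w(b)s_\lambda^*$; since the system is proper, $\varphi_\lambda(a)=\sigma_\lambda^*(a)\in A_w$, so the intertwining identity gives
\[
\sum_{\lambda\in v\Lambda^n}\rho^{(\lambda)}\bigl(\varphi_\lambda(a)\bigr)=\sum_{\lambda\in v\Lambda^n}s_\lambda\rho_w(\sigma_\lambda^*(a))s_\lambda^*=\rho_v(a)\sum_{\lambda\in v\Lambda^n}p_\lambda=\rho_v(a)p_v=\rho_v(a),
\]
using $\sum_{\lambda\in v\Lambda^n}p_\lambda=p_v$ (here row-finiteness and the absence of sources enter). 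Universality then yields $\Phi_\rho:\OO_X\to C^*(\Lambda)$. Surjectivity is immediate: since $\rho_v(1)=p_v$ we get $\rho_\lambda(1)=s_\lambda p_v=s_\lambda$, so all generators lie in the range.

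I expect the injectivity of $\Phi_\rho$ to be the main obstacle, and I would obtain it from gauge-invariant uniqueness. Both $\OO_X=\OO_Y$ — recall $Y=Y_X$ is a \emph{regular} product system over $\N^k$ because $X$ is $k$-regular — and $C^*(\Lambda)$ carry $\T^k$ gauge actions, and $\Phi_\rho$ is equivariant since $\rho_\lambda(\xi)=s_\lambda\rho_v(\xi)$ has degree $d(\lambda)$. Furthermore $\Phi_\rho$ is faithful on the coefficient algebra $A=\bigoplus_v A_v$, because $\Phi_\rho\circ j_A=\bigoplus_v\rho_v$ and each $\rho_v$ is injective with pairwise orthogonal ranges. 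The gauge-invariant uniqueness theorem for the Cuntz–Pimsner algebra of a regular product system then forces $\Phi_\rho$ to be injective, so $\OO_X\cong C^*(\Lambda)$. (Alternatively, the elements $S_\lambda:=j_\lambda(1_{A_{s(\lambda)}})$, where $j:X\to\OO_X$ is the universal covariant representation, form a Cuntz–Krieger $\Lambda$-family by the representation relations and covariance of $j$, inducing $\Psi:C^*(\Lambda)\to\OO_X$ with $\Phi_\rho\circ\Psi=\id$; the substance of injectivity is then precisely that $\Psi\circ\rho_v=j_v$, i.e.\ that $\Psi\circ\Phi_\rho=\id$.)
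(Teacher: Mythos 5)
Your proposal is correct and follows essentially the same route as the paper: the same representation $\rho_\lambda(\xi)=s_\lambda\,\theta\circ\Phi^*(\xi)$, the same sliding identity $\rho_v(\xi)s_\lambda=s_\lambda\rho_w(\sigma_\lambda^*(\xi))$ (which the paper carries out inline rather than isolating as a lemma), the same covariance computation via $\sum_{\lambda\in v\Lambda^n}p_\lambda=p_v$, and injectivity via the gauge-invariant uniqueness theorem of \cite{system} applied under $k$-regularity. Your unitality shortcut $\rho_\lambda(1)=s_\lambda$ for surjectivity is a harmless variant of the paper's nondegeneracy argument, and your parenthetical inverse-map alternative is only sketched, so the substance coincides with the published proof.
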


\begin{proof}
Our strategy will be to find a Cuntz-Pimsner covariant representation $\rho:X\to C^*(\Lambda)$ whose image contains the generators, and then apply the Gauge-Invariant Uniqueness Theorem.
Recall that for $\lambda\in u\Lambda v$,
$X_\lambda$ is the standard $C_0(T_u)-C_0(T_v)$ correspondence ${}_{\sigma_\lambda^*}C_0(T_v)$.
Define $\rho_\lambda:X_\lambda\to C^*(\Lambda)$ by
\[
\rho_\lambda(f)=s_\lambda \theta\circ\Phi^*(f)\righttext{for}f\in C_0(T_v).
\]
Then $\rho_\lambda$ is linear, and for $f,g\in C_0(T_v)$ we have
\begin{align*}
\rho_\lambda(f)^*\rho_\lambda(g)
&=\theta(\Phi^*(\bar f))s_\lambda^*s_\lambda\theta(\Phi^*(g))
\\&=\theta(\Phi^*(\bar f))p_v\theta(\Phi^*(g))
\\&=\theta(\Phi^*(\bar fg))
\\&=p_v\bigl(\<f,g\>_{C(T_v)}\bigr).
\end{align*}
For $\lambda\in \Lambda v$, $\mu\in v\Lambda w$,
$f\in C_0(T_v)$, and $h\in C_0(T_w)$ we have
\begin{align*}
\rho_\lambda(f)\rho_\mu(h)
&=s_\lambda \theta(\Phi^*(f))s_\mu \theta(\Phi^*(h))
\\&=s_\lambda \theta(\Phi^*(f))p_\mu s_\mu \theta(\Phi^*(h))
\\&=s_\lambda p_\mu\theta(\Phi^*(f))s_\mu \theta(\Phi^*(h))
\\&=s_\lambda s_\mu\ad s_\mu^*\circ\theta(\Phi^*(f)) \theta(\Phi^*(h))
\\&=s_{\lambda\mu}\theta\circ \tau_\mu^*(\Phi^*(f)) \theta(\Phi^*(h))
\\&=s_{\lambda\mu}\theta(\tau_\mu^*\circ\Phi^*(f)) \theta(\Phi^*(h))
\\&=s_{\lambda\mu}\theta(\Phi^*\circ\sigma_\mu^*(f)) \theta(\Phi^*(h))
\\&=s_{\lambda\mu}\theta\bigl(\Phi^*(\sigma_\mu^*(f)h)\bigr)
\\&=\rho_{\lambda\mu}(\sigma_\mu^*(f)h)
\\&=\rho_{\lambda\mu}(fh).
\end{align*}
It follows that $\rho:X\to C^*(\Lambda)$ is a representation.

Next we show that $\rho$ is Cuntz-Pimsner covariant.
Let $u\in\Lambda^0$, $n\in\N^k$, and $f\in X_u=C_0(T_u)$.
We need to show that
\[
\rho_u(f)=\sum_{\lambda\in u\Lambda^n}\rho^{(\lambda)}\circ
\varphi_\lambda(f),
\]
where
\[
\varphi_\lambda:C_0(T_u)\to \KK(X_\Lambda)
\]
is the left-module structure map.
We need a little more information regarding the homomorphism
\[
\rho^{(\lambda)}=\rho_\lambda^{(1)}:\KK(X_\lambda)\to C^*(\Lambda).
\]
For $\lambda\in u\Lambda v$ we have $X_\lambda={}_{\sigma_\lambda^*}C_0(T_v)$, so
\[
\KK(X_\lambda)=C_0(T_v),
\]
and for $g,h\in C_0(T_v)$ the rank-one operator $\theta_{g,h}$ is given by (left) multiplication by $g\bar h$. Thus
\begin{align*}
\rho^{(\lambda)}(g\bar h)
&=\rho_\lambda(g)\rho_\lambda(h)^*
\\&=s_\lambda\theta\circ\Phi^*(g)\theta\circ\Phi^*(\bar h)s_\lambda^*
\\&=s_\lambda\circ\theta\circ\Phi^*(g\bar h)s_\lambda^*
\\&=\ad s_\lambda\circ \rho_v(g\bar h).
\end{align*}
Since every function in $C_0(T_v)$ can be factored as $g\bar h$, we conclude that the homomorphism $\rho^{(\lambda)}$ coincides with
\[
\ad s_\lambda\circ \rho_v:C_0(T_v)\to C^*(\Lambda).
\]
Also, $\varphi_\lambda:C_0(T_u)\to \KK(X_\Lambda)$ coincides with $\sigma_\lambda^*:C_0(T_u)\to C_0(T_v)$ (note that $\sigma_\lambda^*$ maps into $C_0(T_v)$ because $\sigma_\lambda$ is proper).
Thus
\begin{align*}
\sum_{\lambda\in u\Lambda^n}\rho^{(\lambda)}\circ\varphi_\lambda(f)
&=\sum_{\lambda\in u\Lambda^n}\ad s_\lambda\circ\theta\circ\Phi^*\circ\sigma_\lambda^*(f)
\\&=\sum_{\lambda\in u\Lambda^n}\ad s_\lambda\circ\theta\circ\tau_\lambda^*\circ\Phi^*(f)
\\&=\sum_{\lambda\in u\Lambda^n}\ad s_\lambda\circ\ad s_\lambda^*\circ\theta\circ\Phi^*(f)
\\&=\sum_{\lambda\in u\Lambda^n}\ad s_\lambda s_\lambda^*\circ\theta\circ\Phi^*(f)
\\&=\sum_{\lambda\in u\Lambda^n}p_\lambda\theta\circ\Phi^*(f)
\\&=p_u\rho_u(f)
\righttext{}\Bigl(\text{since $\sum_{\lambda\in u\Lambda^n}p_\lambda=p_u$}\Bigr)
\\&=\rho_u(f),
\end{align*}
since $\rho_u(C_0(T_u))\subset D_u$ and $p_u=1_{D_u}$.

Therefore $\rho$ gives rise to a homomorphism $\Psi_\rho:\OO_X\to C^*(\Lambda)$ such that
\[
\Psi_\rho\circ \rho^X=\rho,
\]
where $\rho^X\to \OO_X$ is the universal Cuntz-Pimsner covariant representation.
For each $v\in\Lambda^0$, the continuous map $\Phi:\Lambda^\infty\to T$ takes $v\Lambda^\infty$ into $T_v$,
so $\Phi^*$ restricts to a nondegenerate homomorphism from $C_0(T_v)$ to $C(v\Lambda^\infty)$, and hence the homomorphism $\rho_v:C_0(T_v)\to D_v$ is nondegenerate.
It follows that for each $\lambda\in \Lambda v$ the generator $s_\lambda$ is in the range of $\rho_\lambda:X_\lambda\to C^*(\Lambda)$.
Thus $\Psi_\rho:\OO_X\to C^*(\Lambda)$ is surjective.

Finally, we appeal to the Gauge-Invariant Uniqueness Theorem \cite[Theorem~3.3.1]{system} to show that $\Psi_\rho$ is injective. Note that \cite{system} assume that the $\Lambda$-system $X$ is regular, while we only assume that it is $k$-regular; as we have mentioned before, $k$-regularity is all that's required to make the results of \cite{system} true.
First of all, for each $v\in\Lambda^0$, $\Phi$ maps $v\Lambda^0$ onto $T_v$,
and it follows that $\rho_v:C_0(T_v)\to D_u$ is faithful.
Thus the direct sum
\[
\Psi_\rho|_A=\bigoplus_{v\in\Lambda^0}\rho_v:\bigoplus_{v\in\Lambda^0}C_0(T_v)\to \bigoplus_{v\in\Lambda^0}D_v\subset C^*(\Lambda)
\]
is also faithful.
Let $\gamma:\T^k\to \aut C^*(\Lambda)$ be the gauge action.
For $\lambda\in \Lambda^nv$, $f\in C_0(T_v)$, and $z\in\T^k$,
\begin{align*}
\gamma_z\circ\rho_\lambda(f)
&=\gamma_z\bigl(s_\lambda\theta\circ\Phi^*(f)\bigr)
\\&=\gamma_z(s_\lambda)\rho_v(f)\righttext{(since $\rho_v(f)\in D_v\subset C^*(\Lambda)^\gamma$)}
\\&=z^ns_\lambda\rho_v(f)
\\&=z^n\rho_\lambda(f),
\end{align*}
so by \cite[Theorem~3.3.1]{system} $\Psi_\rho$ is faithful.
\end{proof}

\section{Mauldin-Williams $k$-graphs}\label{MW}

We continue to let $\Lambda$ be a row-finite $k$-graph with no sources.

\begin{prop}\label{unique}
Let $(T,\sigma)$ be a $\Lambda$-system of maps such that each $T_v$ is a complete metric space and, for some $c<1$ and
every $\lambda\in\Lambda$,
\[
\delta_v(\sigma_\lambda(t),\sigma_\lambda(s))\le c^{|d(\lambda)|}\delta_v(t,s)
\righttext{for all}\lambda\in\Lambda,t,s\in T_{s(\lambda)},
\]
where $\delta_v$ is the metric on $T_v$, $d$ is the degree functor for the $k$-graph $\Lambda$, and $|n|=\sum_{i=1}^kn_i$ for $n=(n_1,\dots,n_k)\in\N^k$.
Then 
there exists
a unique $k$-surjective $\Lambda$-subsystem $(K,\psi)$ such that each $K_v$ is a bounded closed subset of $T_v$, and in fact each $K_v$ is compact.
\end{prop}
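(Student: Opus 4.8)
The plan is to produce $(K,\psi)$ as a fixed point of the ``refinement'' operators coming from the self-similarity equation, using the Banach fixed point theorem in a Hausdorff metric. For each $v\in\Lambda^0$ let $\mathcal{C}_v$ be the set of nonempty compact subsets of $T_v$ with the Hausdorff metric $d_H$ induced by $\delta_v$; since $T_v$ is complete, so is $(\mathcal{C}_v,d_H)$. On $\mathcal{C}=\prod_v\mathcal{C}_v$ I would use the (possibly extended) sup-metric $D(S,S')=\sup_v d_H(S_v,S'_v)$. For each $n\in\N^k$ define $\mathcal{F}_n\colon\mathcal{C}\to\mathcal{C}$ by
\[
(\mathcal{F}_n S)_v=\bigcup_{\lambda\in v\Lambda^{n}}\sigma_\lambda\bigl(S_{s(\lambda)}\bigr).
\]
This lands in $\mathcal{C}$: row-finiteness makes $v\Lambda^n$ finite, the no-sources hypothesis makes it nonempty, and continuous images and finite unions of compacta are compact.

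The contraction step rests on two standard facts about $d_H$: a $c$-Lipschitz map is $c$-Lipschitz for $d_H$, and $d_H$ of a finite union is bounded by the maximum of the $d_H$'s of the pieces. Combining these with the hypothesis $\delta_v(\sigma_\lambda t,\sigma_\lambda s)\le c^{|d(\lambda)|}\delta_v(t,s)$ and $|d(\lambda)|=1$ for $\lambda\in v\Lambda^{e_i}$ yields $D(\mathcal{F}_{e_i}S,\mathcal{F}_{e_i}S')\le c\,D(S,S')$, so each $\mathcal{F}_{e_i}$ is a contraction with constant $c<1$. Unique factorization in $\Lambda$ together with the functoriality $\sigma_{\lambda\mu}=\sigma_\lambda\circ\sigma_\mu$ gives $\mathcal{F}_n\circ\mathcal{F}_m=\mathcal{F}_{n+m}$; in particular the $\mathcal{F}_{e_i}$ commute. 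Let $K$ be the unique fixed point of $\mathcal{F}_{e_1}$ from Banach's theorem. Then $\mathcal{F}_{e_1}(\mathcal{F}_{e_j}K)=\mathcal{F}_{e_j}(\mathcal{F}_{e_1}K)=\mathcal{F}_{e_j}K$, so $\mathcal{F}_{e_j}K$ is again a fixed point of $\mathcal{F}_{e_1}$; by uniqueness $\mathcal{F}_{e_j}K=K$. Thus $K$ is a common fixed point of all $\mathcal{F}_{e_i}$, and a telescoping argument as in the proof of \propref{self} (writing $v\Lambda^n$ through its factorizations into generators) promotes this to $K_v=\bigcup_{\lambda\in v\Lambda^{n}}\sigma_\lambda(K_{s(\lambda)})$ for every $n\in\N^k$. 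Hence $(K,\sigma|_K)$ is a $k$-surjective $\Lambda$-subsystem with each $K_v$ compact, so bounded and closed.

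For uniqueness I would rerun the same estimates in the larger complete metric space $\mathcal{C}'_v$ of nonempty bounded closed subsets of $T_v$: $\mathcal{F}_{e_1}$ is still a contraction there, and any $k$-surjective $\Lambda$-subsystem with bounded closed fibres is, by its defining equation at $n=e_1$, a fixed point of $\mathcal{F}_{e_1}$ in $\mathcal{C}'$. Since the compact $K$ is also such a fixed point, Banach uniqueness forces the two to coincide; this simultaneously gives uniqueness and the assertion that the bounded closed solution is automatically compact.

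The step I expect to be the main obstacle is the metric bookkeeping when $\Lambda^0$ is infinite: then $D$ is only an \emph{extended} metric, and one must verify that the chosen orbit stays in a single finite-distance class (so that Banach applies) and that no two solutions sit in distinct infinite-distance classes (so that uniqueness survives). When the $1$-skeleton is finite, as in the Mauldin--Williams setting motivating the result, all sup-distances between bounded closed tuples are finite and the difficulty evaporates; in general one controls $\sup_v d_H((\mathcal{F}_{e_1}S)_v,S_v)$ for a starting tuple of singletons $(\{t_v\})$, using the factor $c$ to beat the growth over vertices. An alternative that isolates all the metric estimates in one place is to build the intertwining map $\Phi\colon\Lambda^\infty\to T$ directly as the limit $\Phi(x)=\lim_n\sigma_{x(0,n)}(t)$ (Cauchy by the contraction bound) and to invoke \propref{T' self} for existence, $k$-surjectivity, and compactness.
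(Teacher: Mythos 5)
Your proposal is correct and takes essentially the same route as the paper: a Banach contraction argument on the product of Hausdorff-metric hyperspaces over $\Lambda^0$, commutativity of the operators $\mathcal{F}_n$ (via unique factorization and $\sigma_{\lambda\mu}=\sigma_\lambda\circ\sigma_\mu$) to extract a common fixed point, and then a second run of the same argument in the other hyperspace to get compactness and uniqueness together --- the only cosmetic difference being that the paper works with bounded closed sets first and compacta second, while you reverse the order and contract only along the generators $\mathcal{F}_{e_i}$. Your closing caveat about the extended sup-metric when $\Lambda^0$ is infinite is a genuine subtlety that the paper dismisses with the bare remark that $\Lambda^0$ is countable, so flagging it is to your credit rather than a defect of the proposal.
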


Note that to check the hypothesis it suffices to show that each of the generating maps $\sigma_\lambda$ for $\lambda\in \Lambda^{e_i}$ has Lipschitz constant at most $c$, where $e_1,\dots,e_k$ is the standard basis for $\N^k$.

\begin{proof}
Let
\[
\CC=\prod_{v\in\Lambda^0}\CC(T_v),
\]
where for $v\in\Lambda^0$ we let $\CC(T_v)$ denote the set of bounded closed subsets of $T_v$, which is complete under the Hausdorff metric.
Since $\Lambda^0$ is countable, $\CC$ is a complete metric space. 
For each $n\in\N^k$ define a map $\wilde\sigma^n:\CC\to\CC$ by
\[
\wilde\sigma^n(C)_v=\bigcup_{\lambda\in v\Lambda^n}\sigma_\lambda(C_{s(\lambda)}).
\]
As in \cite{mauldinwilliams}, $\wilde\sigma^n$ is a contraction, and so
has a unique fixed point in $\CC$.
We need to know that  the maps $\{\wilde\sigma^n\}_{n\in\N^k}$  all have the same fixed point, and
it suffices to show that they commute.
Let $n,m\in\N^k$. Then for all $C=(C_v)_{v\in\Lambda^0}\in\CC$ and $v\in\Lambda^0$ we have
\begin{align*}
\wilde\sigma^n\circ\wilde\sigma^m(C)_v
&=\wilde\sigma^n\bigl(\wilde\sigma^m(C)\bigr)_v
\\&=\bigcup_{\lambda\in v\Lambda^n}\sigma_\lambda\bigl(\wilde\sigma^m(C)_{s(\lambda)}\bigr)
\\&=\bigcup_{\lambda\in v\Lambda^n}\sigma_\lambda\left(\bigcup_{\mu\in s(\lambda)\Lambda^m}\sigma_\mu\bigl(C_{s(\mu)}\bigr)\right)
\\&=\bigcup_{\lambda\in v\Lambda^n}\bigcup_{\mu\in s(\lambda)\Lambda^m}
\sigma_\lambda\circ\sigma_\mu\bigl(C_{s(\mu)}\bigr)
\\&=\bigcup_{\lambda\mu\in v\Lambda^{n+m}}\sigma_{\lambda\mu}\bigl(C_{s(\lambda\mu)}\bigr)
\\&=\bigcup_{\alpha\in v\Lambda^{n+m}}\sigma_{\alpha}\bigl(C_{s(\alpha)}\bigr),
\end{align*}
which, by the factorization property of $\Lambda$, coincides with
\[
\bigcup_{\mu\in v\Lambda^m}\bigcup_{\lambda\in s(\mu)\Lambda^n}
\sigma_\mu\circ\sigma_\lambda\bigl(C_{s(\lambda)}\bigr)
=\wilde\sigma^m\wilde\sigma^n(C)_v.
\]
Letting $(K_v)_{v\in\Lambda^0}$ be the unique common fixed point of $\wilde\sigma$ on $\CC$, we see that, setting $K=\bigcup_{v\in\Lambda^0}K_v$ and $\psi=\sigma|_K$, the restriction $(K,\psi)$ of $(T,\sigma)$ is the unique $k$-surjective $\Lambda$-subsystem with bounded closed subsets $K_v$.

To see that in fact every $K_v$ is compact,
play the same game with $\CC(T_v)$ replaced by the set of compact subsets of $T_v$,
again getting a unique fixed point. But since the compact subsets are among the bounded closed subsets, the resulting $\Lambda$-subsystem must coincide with the one we found above, by uniqueness.
\end{proof}

\begin{defn}
A \emph{Mauldin-Williams $\Lambda$-system} is 
a $k$-surjective $\Lambda$-system of maps $(T,\sigma)$
such that
each $T_v$ is a compact metric space and,
for some $c<1$,
every $\sigma_\lambda:T_{s(\lambda)}\to T_{r(\lambda)}$ is a contraction with Lipschitz constant at most $c^{|d(\lambda)|}$.
\end{defn}

\begin{prop}
Every Mauldin-Williams $\Lambda$-system $(T,\sigma)$ is self-similar,
and if $X$ is the associated $\Lambda$-system of correspondences then $\OO_X\cong C^*(\Lambda)$.
\end{prop}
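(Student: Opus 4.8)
The plan is to obtain the isomorphism $\OO_X\cong C^*(\Lambda)$ as an immediate corollary of \thmref{main}, so that the real content of the statement is the self-similarity claim. First I would observe that a Mauldin--Williams $\Lambda$-system is automatically $k$-regular: being $k$-surjective it is \emph{a fortiori} $k$-dense, and since each $\sigma_\lambda\colon T_{s(\lambda)}\to T_{r(\lambda)}$ is a continuous map between compact Hausdorff spaces, it is automatically proper. Thus $(T,\sigma)$ is $k$-regular, and once self-similarity is in hand, \thmref{main} applies verbatim to yield $\OO_X\cong C^*(\Lambda)$.

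So the heart of the argument is to produce a surjective intertwining map $\Phi\colon\Lambda^\infty\to T$, which I would build as the usual coding (address) map of fractal geometry. For $x\in v\Lambda^\infty$ consider the family of nonempty compact sets
\[
F_m(x)=\sigma_{x(0,m)}\bigl(T_{s(x(0,m))}\bigr)\subset T_v,\qquad m\in\N^k,
\]
which decreases as $m$ increases because $\sigma_{x(0,m')}=\sigma_{x(0,m)}\circ\sigma_{x(m,m')}$ for $m\le m'$ by the functoriality of a $\Lambda$-system of maps. The contraction hypothesis gives
\[
\diam F_m(x)\le c^{|m|}\,\diam T_{s(x(0,m))},
\]
and when $\Lambda^0$ is finite (the Ionescu setting) the fibre diameters are uniformly bounded, so the right-hand side tends to $0$ as $|m|\to\infty$. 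Then $\bigcap_m F_m(x)$ is a single point, which I define to be $\Phi(x)$. Continuity follows since two paths in $v\Lambda^\infty$ agreeing up to degree $m$ have their $\Phi$-values in the common set $F_m$ of arbitrarily small diameter, and the intertwining relation $\sigma_\lambda\circ\Phi=\Phi\circ\tau_\lambda$ follows from $\sigma_\lambda\circ\sigma_{x(0,m)}=\sigma_{\lambda\,x(0,m)}$ together with $(\lambda x)(0,d(\lambda)+m)=\lambda\,x(0,m)$; both verifications are routine. I expect the step requiring the most care to be precisely the well-definedness of $\Phi$, i.e.\ that the nested images genuinely shrink to points: this is exactly where contractivity of the $\sigma_\lambda$ is indispensable, and where one needs uniform control on $\diam T_v$ if $\Lambda^0$ is allowed to be infinite.

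Finally, surjectivity of $\Phi$ I would deduce formally rather than by hand, which is the clean payoff of the preceding propositions. By \propref{T' self} the images $T'_v:=\Phi(v\Lambda^\infty)$ assemble into a self-similar, $k$-surjective $\Lambda$-subsystem $(T',\sigma|_{T'})$ of $(T,\sigma)$ with each $T'_v$ compact. But $(T,\sigma)$ is itself a $k$-surjective $\Lambda$-subsystem of $(T,\sigma)$ whose fibres $T_v$ are compact, hence bounded and closed, and by the uniqueness clause of \propref{unique} there is exactly one such subsystem. Therefore $T'=T$, so $\Phi$ is onto and $(T,\sigma)$ is self-similar. Combining this with the $k$-regularity noted above, \thmref{main} gives $\OO_X\cong C^*(\Lambda)$, completing the argument.
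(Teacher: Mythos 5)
Your proposal is correct and follows the paper's own argument essentially verbatim: the same Ionescu-style address map defined by $\{\Phi(x)\}=\bigcap_{n}\sigma_{x(0,n)}\bigl(T_{s(x(0,n))}\bigr)$, the same routine continuity and intertwining checks, and surjectivity deduced exactly as in the paper by combining \propref{T' self} with the uniqueness clause of \propref{unique}. If anything you are slightly more careful than the paper, which tacitly assumes the diameters $\diam T_{s(x(0,n))}$ are uniformly bounded (automatic when $\Lambda^0$ is finite, the Ionescu setting) when asserting that the nested images shrink to a point, and which leaves implicit the $k$-regularity verification ($k$-surjectivity gives $k$-density, and properness is free since the fibres are compact) needed to invoke \thmref{main}.
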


\begin{proof}
We adapt the technique of Ionescu \cite{ionescu}.
Let $x\in v\Lambda^\infty$,
so that $x:\Omega_k\to\Lambda$ is a $k$-graph morphism.
For each $n\in\N^k$ let $x(0,n)$ be the unique path $\lambda\in\Lambda^n$ such that $x=\lambda y$ for some (unique) $y\in s(\lambda)\Lambda^\infty$.
By definition of Mauldin-Williams $\Lambda$-system, the range of each $\sigma_{x(0,n)}$ has diameter at most $c^{|n|}$.
Thus by compactness there is a unique element $\Phi(x)\in T_v$ such that
\[
\bigcap_{n\in\N^k}\sigma_{x(0,n)}(T_{s(x(0,n))})=\{\Phi(x)\}.
\]
We get a map $\Phi:\Lambda^\infty\to T$, which is continuous because for each $x\in\Lambda^\infty$ the images under $\Phi$ of the neighborhoods $x(0,n)\Lambda^\infty$ of $x$ have diameters shrinking to 0.
By construction, it's obvious that
\[
\Phi(\lambda x)=\sigma_\lambda(\Phi(x))\righttext{for all}\lambda\in\Lambda,x\in s(\lambda)\Lambda^\infty,
\]
so $\Phi$ is intertwining.

We show that $\Phi$ is surjective.
Put $T'=\Phi(\Lambda^\infty)$.
By \propref{T' self}, $(T',\sigma|_{T'})$ is $k$-surjective with each $T'_v$ compact,
which implies that $T'=T$ by the uniqueness in \propref{unique}.

Finally, it now follows from \thmref{main} that $\OO_X\cong C^*(\Lambda)$.
\end{proof}

\begin{rem}
It would be completely routine at this point to adapt Ionescu's techniques to prove a higher-rank version his other ``no-go theorem'' \cite[Theorem~3.4]{ionescu}, namely that there are no ``noncommutative Mauldin-Williams $\Lambda$-systems'' of maps.
\end{rem}

\section{The associated topological $k$-graph}\label{topological}

Let $\Lambda$ be a row-finite $k$-graph with no sources,
and let $(T,\sigma)$ be a $k$-regular $\Lambda$-system of maps.
We do \emph{not} assume that $(T,\sigma)$ is self-similar unless otherwise noted.

Let $(T,\sigma)$ be a $\Lambda$-system of maps.
We want to define a topological $k$-graph $\Lambda*T$ as follows:
\begin{enumerate}
\item
$\Lambda*T=\{(\lambda,t)\in \Lambda\times T:t\in T_{s(\lambda)}\}$:

\item
$s(\lambda,t)=(s(\lambda),t)$
and
$r(\lambda,t)=(r(\lambda),\sigma_\lambda(t))$;

\item
if $s(\lambda)=r(\mu)$ and $t=\sigma_\mu(s)$, then
$(\lambda,t)(\mu,s)=(\lambda\mu,s)$;

\item
$d(\lambda,t)=d(\lambda)$.
\end{enumerate}
$\Lambda*T$ has the relative topology from $\Lambda\times T$,
and
is the disjoint union of the open subsets $\{\lambda\}\times T_{s(\lambda)}$,
each of 
which
is a homeomorphic copy of $T_{s(\lambda)}$.

\begin{prop}\label{top graph}
The above operations make $\Lambda*T$ into a topological $k$-graph.
\end{prop}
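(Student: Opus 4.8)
The plan is to verify, one clause at a time, each requirement in Yeend's definition of a topological $k$-graph recalled above: that $\Lambda*T$ is a category carrying a degree functor with the unique factorization property, that its topology is locally compact Hausdorff, and that the four structure maps (multiplication, range, source, degree) have the prescribed continuity, openness, and local-homeomorphism properties. The algebraic axioms will be driven entirely by the functoriality $\sigma_\lambda\circ\sigma_\mu=\sigma_{\lambda\mu}$ and the factorization property of $\Lambda$, while the topological conditions all reduce to the decomposition of $\Lambda*T$ into clopen pieces.

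First I would dispatch the purely algebraic $k$-graph axioms. Composability of $(\lambda,t)$ and $(\mu,s)$ is exactly the condition $s(\lambda,t)=r(\mu,s)$, i.e. $s(\lambda)=r(\mu)$ and $t=\sigma_\mu(s)$, so the product of clause (3) is defined precisely on composable pairs; a short computation using $\sigma_{\lambda\mu}=\sigma_\lambda\circ\sigma_\mu$ shows that $r$ and $s$ are compatible with the product and that both triple products reduce to $(\lambda\mu\nu,r)$, giving associativity. The vertices $(v,t)$, $t\in T_v$, act as identities (here one uses $\sigma_v=\id$), and $d(\lambda,t)=d(\lambda)$ is visibly additive. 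The factorization property is inherited from $\Lambda$: given $(\lambda,t)$ with $d(\lambda)=m+n$, the unique factorization $\lambda=\mu\nu$ in $\Lambda$ produces the unique factorization $(\lambda,t)=(\mu,\sigma_\nu(t))(\nu,t)$, the second coordinates being forced by the product rule.

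Then I would turn to the topology. Writing $\Lambda*T=\bigsqcup_{\lambda\in\Lambda}\{\lambda\}\times T_{s(\lambda)}$ as a disjoint union of clopen pieces, each homeomorphic to the locally compact Hausdorff space $T_{s(\lambda)}$, shows at once that $\Lambda*T$ is locally compact Hausdorff. The degree functor is constant on each piece, hence locally constant, hence continuous into discrete $\N^k$. The range map restricts on the $\lambda$-piece to $t\mapsto(r(\lambda),\sigma_\lambda(t))$, continuous because $\sigma_\lambda$ is; and the source map restricts on the $\lambda$-piece to $(\lambda,t)\mapsto(s(\lambda),t)$, a homeomorphism onto the open vertex piece $\{s(\lambda)\}\times T_{s(\lambda)}$, so $s$ is a local homeomorphism.

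The real work is the continuity and openness of multiplication, and I would handle both simultaneously by showing multiplication is a local homeomorphism. The composable pairs $(\Lambda*T)^{*2}$ decompose into clopen pieces indexed by composable $(\lambda,\mu)$, the $(\lambda,\mu)$-piece being $\{((\lambda,\sigma_\mu(s)),(\mu,s)):s\in T_{s(\mu)}\}$, homeomorphic to $T_{s(\mu)}$ via the coordinate $s$. The key observation is that the product rule $(\lambda,t)(\mu,s)=(\lambda\mu,s)$ sends this piece by $s\mapsto(\lambda\mu,s)$ homeomorphically \emph{onto all of} the target piece $\{\lambda\mu\}\times T_{s(\mu)}$, with continuous inverse $(\lambda\mu,s)\mapsto((\lambda,\sigma_\mu(s)),(\mu,s))$. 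Since each clopen piece of the domain maps homeomorphically onto a clopen piece of $\Lambda*T$, multiplication is a local homeomorphism, hence both continuous and open. I expect this final point to be the only step needing genuine thought, in particular the recognition that each factorization piece covers its target piece \emph{entirely}, so that no properness or surjectivity of the maps $\sigma_\lambda$ is required; everything else is bookkeeping powered by the factorization property of $\Lambda$ and the functoriality of $\sigma$.
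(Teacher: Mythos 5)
Your proposal is correct and follows essentially the same route as the paper's proof: the same unique factorization $(\lambda,t)=(\mu,\sigma_\nu(t))(\nu,t)$, the same verification that the source map restricts to homeomorphisms $\{\lambda\}\times T_{s(\lambda)}\to\{s(\lambda)\}\times T_{s(\lambda)}$, and the same key observation that multiplication is a local homeomorphism carrying each composable piece indexed by $(\lambda,\mu)$ bijectively onto the whole piece $\{\lambda\mu\}\times T_{s(\mu)}$. The only difference is that you spell out the algebraic bookkeeping (associativity, identities, degree additivity) that the paper dismisses as routine.
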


\begin{proof}
This is routine.
For instance, it's completely routine to check that $\Lambda*T$ is a small category and the map defined in (4) is a functor.
Let's check the unique factorization property:
Let $(\lambda,t)\in \Lambda*T$ and $m,n\in\N^k$ with $d(\lambda)=m+n$.
Then we can uniquely write $\lambda=\mu\nu$ with $d(\mu)=m$ and $d(\nu)=n$.
We have
\[
(\lambda,t)=(\mu,\sigma_\nu(t))(\nu,t),
\quad
d(\mu,\sigma_\nu(t))=m,
\midtext{and}
d(\nu,t)=n,
\]
and $(\mu,\sigma_\nu(t))$ and $(\nu,t)$ are unique since $\mu$ and $\nu$ are.
It's immediate that the degrees match up and this factorization is unique.

The multiplication on the category $\Lambda*T$ is continuous 
and open because it is in fact a local homeomorphism
from the fibred product $(\Lambda*T)*(\Lambda*T)$ to $\Lambda*T$,
which for each $(\lambda,\mu)\in \Lambda\times \Lambda$ with $s(\lambda)=r(\mu)$
maps the open subset
\[
\Bigl(\bigl(\{\lambda\}\times T_{s(\lambda)}\bigr)
\times
\bigl(\{\mu\}\times T_{s(\mu)}\bigr)\Bigr)
\cap
\Bigl((\Lambda*T)*(\Lambda*T)\Bigr)
\]
bijectively onto the open subset $\{\lambda\mu\}\times T_{s(\mu)}$.

To see that the source map on $\Lambda*T$ is a local homeomorphism,
just note that it restricts to homeomorphisms
\[
\{\lambda\}\times T_{s(\lambda)}\to \{s(\lambda)\}\times T_{s(\lambda)}.
\qedhere
\]
\end{proof}

\begin{rem}
One could reasonably regard a $\Lambda$-system of maps as an action of $\Lambda$ on the space $T=\bigsqcup_{v\in \Lambda^0}T_v$,
and the topological $k$-graph $\Lambda*T$ as the associated transformation $k$-graph.
\end{rem}

\begin{rem}
If each $T_v$ is discrete and every map $\sigma_\lambda:T_{s(\lambda)}\to T_{r(\lambda)}$ is bijective, then the above $k$-graph $\Lambda*T$ coincides with that of \cite[Proposition~3.3]{pqr:cover}, where the main point was that the coordinate projection $(\lambda,t)\mapsto \lambda$ is a model for coverings of the $k$-graph $\Lambda$.
\end{rem}

\begin{prop}
Let $(T,\sigma)$ be a $k$-regular $\Lambda$-system of maps,
and let $(\AA,\varphi)$ be the associated $\Lambda$-system of homomorphisms,
which in turn has an associated $\Lambda$-system $X$ of correspondences.
Then
\[
\OO_X\cong C^*(\Lambda*T),
\]
where $\Lambda*T$ is the topological $k$-graph 
of
\propref{top graph}.
\end{prop}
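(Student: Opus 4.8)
The plan is to realize $C^*(\Lambda*T)$ as a Cuntz-Nica-Pimsner algebra via \cite{carlsen}, to identify the resulting $\N^k$-system with the $\N^k$-system $Y=Y_X$ attached to $X$, and then to use $k$-regularity to collapse the Cuntz-Nica-Pimsner algebra to the ordinary Cuntz-Pimsner algebra $\OO_Y=\OO_X$. Concretely, applying \cite[Proposition~5.9]{carlsen} to the topological $k$-graph $\Gamma=\Lambda*T$ of \propref{top graph} yields an $\N^k$-system $Z$ of correspondences over $C_0(\Gamma^0)$, and \cite[Theorem~5.20]{carlsen} gives $C^*(\Lambda*T)\cong\NN\OO_Z$. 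The bulk of the work is to exhibit an isomorphism $Z\cong Y$ of $\N^k$-systems.

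First I would identify the coefficient algebras. Since $(\lambda,t)$ is a vertex of $\Lambda*T$ exactly when $d(\lambda)=0$, we have $\Gamma^0=\bigsqcup_{v\in\Lambda^0}\{v\}\times T_v$, so $C_0(\Gamma^0)\cong\bigoplus_{v\in\Lambda^0}C_0(T_v)=A$, the $c_0$-direct sum over which $Y$ is built. For $n\in\N^k$ the set $(\Lambda*T)^n=\bigsqcup_{\lambda\in\Lambda^n}\{\lambda\}\times T_{s(\lambda)}$ is a disjoint union of clopen copies of the $T_{s(\lambda)}$, so a compactly supported function on it is a finitely supported family $(\xi_\lambda)_{\lambda\in\Lambda^n}$ with $\xi_\lambda\in C_c(T_{s(\lambda)})$; completing, $Z_n=\bigoplus_{\lambda\in\Lambda^n}C_0(T_{s(\lambda)})$. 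This is exactly the underlying module of $Y_n=\bigoplus_{\lambda\in\Lambda^n}X_\lambda$, since $X_\lambda={}_{\sigma_\lambda^*}C_0(T_{s(\lambda)})$.

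Next I would check that the two correspondence structures coincide fibre-by-fibre. Reading off the topological-graph operations of \cite{carlsen} through the formulas $s(\lambda,t)=(s(\lambda),t)$ and $r(\lambda,t)=(r(\lambda),\sigma_\lambda(t))$, the right action $(\xi\cdot a)(\lambda,t)=\xi(\lambda,t)\,a_{s(\lambda)}(t)$, the inner product $\<\xi,\eta\>_A(v,t)=\sum_{\lambda\in\Lambda^n v}\overline{\xi(\lambda,t)}\eta(\lambda,t)$, and the left action $(a\cdot\xi)(\lambda,t)=a_{r(\lambda)}(\sigma_\lambda(t))\,\xi(\lambda,t)$ match, respectively, the right $A$-action, the $A$-valued inner product, and the left $A$-action $\varphi_\lambda=\sigma_\lambda^*$ on $Y_n$; here the local-homeomorphism property of the source map makes each inner-product sum finite. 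Finally, since $\sigma_\lambda\circ\sigma_\mu=\sigma_{\lambda\mu}$ gives $\varphi_\mu\circ\varphi_\lambda=\varphi_{\lambda\mu}$, the multiplication in $Z$, namely $(\xi\eta)(\lambda\mu,s)=\xi(\lambda,\sigma_\mu(s))\eta(\mu,s)$, is precisely the product-system multiplication $X_\lambda\otimes_{A_{s(\lambda)}}X_\mu\to X_{\lambda\mu}$ of \cite[Proposition~3.17]{system}. Hence $Z\cong Y$ as $\N^k$-systems, and $C^*(\Lambda*T)\cong\NN\OO_Z\cong\NN\OO_Y$.

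The main obstacle is the last step: passing from $\NN\OO_Y$ to $\OO_Y$. Because $(T,\sigma)$ is $k$-regular, the associated $\Lambda$-system $X$ of correspondences is $k$-regular, so $Y=Y_X$ is a regular $\N^k$-system; this is exactly the ``niceness'' that makes $\Lambda*T$ one of the topological $k$-graphs for which $\NN\OO_Z$ coincides with the Cuntz-Pimsner algebra $\OO_Z$, so that $\NN\OO_Y=\OO_Y$. Since $\OO_Y=\OO_X$ by definition, we conclude $C^*(\Lambda*T)\cong\OO_X$. I would take care to confirm that the identification of $Z$ with $Y$ transports regularity (properness, fullness, nondegeneracy, and $k$-faithfulness) across the isomorphism, so that the coincidence $\NN\OO=\OO$ genuinely applies.
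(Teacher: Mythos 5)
Your proposal is correct and follows essentially the same route as the paper: both realize $C^*(\Lambda*T)$ as $\NN\OO_Z$ via \cite[Theorem~5.20]{carlsen}, exhibit a fibrewise isomorphism $Z\cong Y_X$ of $\N^k$-systems (matching coefficient algebras, module actions, inner products, and the multiplication), and use $k$-regularity to collapse $\NN\OO$ to $\OO$. The only difference is cosmetic: where you flag the collapse as needing verification, the paper nails it down by citing compact alignment \cite[Proposition~5.8]{fowlerdiscrete} and \cite[Corollary~5.2]{simsyeend}, using that $A$ maps injectively into $\KK(Z_n)$ for all $n$.
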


\begin{proof}
Our strategy is to show that $\OO_X$ and $C^*(\Lambda*T)$ are isomorphic to the Cuntz-Pimsner algebras of isomorphic $\N^k$-systems of correspondences.
Recall that $\OO_X\cong \OO_Y$,
where $Y=Y_X$ is the $\N^k$ system associated to $X$.
Thus for each $n\in\N^k$ we have
\[
Y_n=\bigoplus_{\lambda\in \Lambda^n}X_\lambda,
\]
where $X_\lambda$ is the 
correspondence over $A=\bigoplus_{v\in\Lambda^0}A_v$
naturally associated
(via identifying the $A_v$'s with direct summands in $A$)
to the standard $A_{r(\lambda)}-A_{s(\lambda)}$ correspondence 
${}_{\sigma_\lambda^*}A_{s(\lambda)}$
determined by the homomorphism $\sigma_\lambda^*:A_{r(\lambda)}\to M(A_{s(\lambda)})$
given by composition with $\sigma_v:T_{s(v)}\to T_{r(v)}$.

On the other hand,
by \cite[Theorem~5.20]{carlsen} $C^*(\Lambda*T)$ is isomorphic to the Cuntz-Nica-Pimsner algebra $\NN\OO_Z$,
where $Z$ is the $\N^k$-system of 
$C_0((\Lambda*T)^0)$-correspondences associated to the topological $k$-graph $\Lambda*T$.
As we'll show in this proof, the $N^k$-systems $Z$ and $Y$ are isomorphic.
Since the $\Lambda$-system $(T,\sigma)$ is $k$-regular, so is $Y$, and hence so is $Z$.
In particular, since each pair in $\N^k$ has an upper bound,
and $C_0((\Lambda*T)^0)$ maps 
injectively 
into the compacts on $Z_n$ for every $n\in\N^k$,
it follows from \cite[Corollary~5.2]{simsyeend} that $\NN\OO_Z=\OO_Z$,
because
by \cite[Proposition~5.8]{fowlerdiscrete} $Z$ is compactly aligned.

Let's see what the $\Lambda$-system $Z$ looks like in this situation:
for each $n\in\N^k$,
the correspondence $Z_n$
over $C_0((\Lambda*T)^0)$
is a completion of $C_c((\Lambda*T)^n)$.
We can safely identify $(\Lambda*T)^0$ with $T=\bigsqcup_{v\in \Lambda^0}T_v$,
and hence $C_0((\Lambda*T)^0)$ with $A=\bigoplus_{v\in\Lambda^0}C_0(T_v)$,
and in this way $Z_n$ becomes an $A$-correspondence.
For $\xi,\eta\in C_c((\Lambda*T)^n)=C_c(\Lambda^n*T)$, the inner product is given by
\[
\<\xi,\eta\>_A(t)=\sum_{\lambda\in \Lambda^nv}\bar{\xi(\lambda,t)}\eta(\lambda,t),\quad t\in T_v,v\in\Lambda^0,
\]
and the right and left module operations are given for $f\in A$ by
\begin{align*}
&(\xi\cdot f)(\lambda,t)=\xi(\lambda,t)f(t)\\
&(f\cdot \xi)(\lambda,t)=f(\sigma_\lambda(t))\xi(\lambda,t).
\end{align*}

Note that
\[
(\Lambda*T)^n=\bigsqcup_{\lambda\in \Lambda^n}(\{\lambda\}\times T_{s(\lambda)}).
\]
Thus for each $\lambda\in\Lambda^nv$ we have a natural inclusion map
\[
C_c(\{\lambda\}\times T_v)\hookrightarrow Z_n,
\]
and $Z_n$ is the closed span of these subspaces.
Moreover, their closures form a pairwise orthogonal family of subcorrespondences of $Z_n$:
\[
Z_n(\lambda)=\bar{C_c(\{\lambda\}\times T_v)}\righttext{for}\lambda\in\Lambda^nv,
\]
and we see that
\[
Z_n=\bigoplus_{\lambda\in\Lambda^n}Z_n(\lambda)
\]
as $A$-correspondences.

We will obtain an isomorphism $\psi:Y\to Z$ of $\N^k$-systems by defining isomorphisms $\psi_n:Y_n\to Z_n$ of $A$-correspondences and
then verifying that
\[
\psi_n(\xi)\psi_m(\eta)=\psi_{n+m}(\xi\eta)\righttext{for all}(\xi,\eta)\in Y_n\times Y_m.
\]
By the above, to get an isomorphism $\psi_n:Y_n\to Z_n$ it suffices to get isomorphisms $\psi_{n,\lambda}:X_\lambda\to Z_n(\lambda)$ for each $\lambda\in\Lambda^n$.
If $\lambda\in \Lambda^nv$ and 
\[
\xi\in C_c(T_v)\subset X_\lambda
\]
define
\[
\psi(\xi)\in C_c(\{\lambda\}\times T_v)
\subset Z_n(\lambda)
\]
by
\[
\psi(\xi)(\lambda,t)=\xi(t).
\]
Routine computations show that $\psi_{n,\lambda}$ is an isomorphism.

Now we check multiplicativity, and again it suffices to consider the fibres of the $\Lambda$-system $X$: 
if
\begin{align*}
&\xi\in X_\lambda\righttext{for}\lambda\in \Lambda^nv\\
&\eta\in X_\mu\righttext{for}\mu\in v\Lambda^m
\end{align*}
then for $t\in T_{s(\mu)}$ we have
\begin{align*}
\bigl(\psi_{n,\lambda}(\xi)\psi_{m,\mu}(\eta)\bigr)(\lambda\mu,t)
&=\psi_{n,\lambda}(\xi)(\lambda,\sigma_\mu(t))\psi_{m,\mu}(\mu,t)
\\&=\xi(\sigma_\mu(t))\eta(t)
\\&=(\xi\eta)(t)
\\&=\bigl(\psi_{n+m,\lambda\mu}(\xi\eta)\bigr)(\lambda\mu,t).
\qedhere
\end{align*}
\end{proof}

\section{The tensor groupoids}\label{tensor groupoid}

Recall that in \cite{fowlersims} Fowler and Sims study what they call \emph{product systems taking values in a tensor groupoid}.
Their product systems are over semigroups, and here we want to consider the 
special cases related to our $\Lambda$-systems of homomorphisms or maps, where the $k$-graph $\Lambda$ has a single vertex, 
and so in particular is a 
monoid whose identity element is the unique vertex.
Since we won't need to do serious work with the concept, here we informally regard a \emph{tensor groupoid} as a groupoid $\GG$ with a ``tensor'' operation $X\otimes Y$ and an ``identity'' object $1_\GG$ such that the ``expected'' redistributions of parentheses and canceling of tensoring with the identity are implemented via given natural equivalences.
As defined in \cite{fowlersims}, 
a \emph{product system} 
over a semigroup $S$
taking values in a tensor groupoid $\GG$ 
is a family $\{X_s\}_{s\in S}$ of objects in $\GG$
together with an associative family $\{\alpha_{s,t}\}_{s,t\in S}$ of isomorphisms
\[
\alpha_{s,t}:X_s\otimes X_t\to X_{st},
\]
and moreover if $S$ has an identity $e$ then $X_e=1_\GG$ and $\alpha_{e,s},\alpha_{s,e}$ are the given isomorphisms $1_\GG\otimes X_s\cong X_s$ and $X_s\otimes 1_\GG\cong X_s$.

\subsection*{Systems of homomorphisms}

Let $A$ be a $C^*$-algebra, and  $\GG$ be the tensor groupoid whose objects are 
the nondegenerate homomorphisms $\pi:A\to M(A)$,
whose only morphisms are the identity morphisms on objects,
and with identity $1_\GG=\id_A$.
Define a tensor operation on $\GG$ by composition:
\[
\pi_1\otimes \pi_2=\pi_2\circ \pi_1,
\]
where $\pi_2$ has been canonically extended to a strictly continuous unital endomorphism of $M(A)$.
Standard properties of composition show that $\GG$ is indeed a tensor groupoid,
in a trivial way:
the tensor operation is actually associative, and $1_\GG$ acts as an actual identity for tensoring, 
so the axioms of \cite{fowlersims} for a tensor groupoid are obviously satisfied.

Due to the special nature of this tensor groupoid $\GG$,
a \emph{product system over $\N^k$ taking values in $\GG$},
as in \cite[Definition~1.1]{fowlersims},
is a homomorphism
$n\mapsto \varphi_n$
from the additive monoid $\N^k$ into the monoid of nondegenerate homomorphisms $A\to M(A)$ under composition,
in other words such a product system is precisely what we call in the current paper an $\N^k$-system of homomorphisms.

\subsection*{Systems of maps}

Quite similarly to the above,
let $T$ be a locally compact Hausdorff space, and  $\GG$ be the tensor groupoid whose objects are 
the continuous maps $\sigma:X\to X$,
whose only morphisms are the identity morphisms on objects,
and with identity $1_\GG=\id_X$.
Define a tensor operation on $\GG$ by composition:
\[
\sigma\otimes \psi=\sigma\circ\psi.
\]
Again, $\GG$ is indeed a tensor groupoid,
in a trivial way, because
the tensor operation is actually associative, and $1_\GG$ acts as an actual identity for tensoring.

A \emph{product system over $\N^k$ taking values in $\GG$},
as in \cite[Definition~1.1]{fowlersims},
is a homomorphism
$n\mapsto \sigma_n$
from the additive monoid $\N^k$ into the monoid of continuous selfmaps of $X$ maps under composition,
in other words such a product system is precisely what we call in the current paper an $\N^k$-system of maps.

\section{Reversing the processes}\label{reverse}

In \remref{processes} we noted that every $\Lambda$-system of maps gives rise to a $\Lambda$-system of homomorphisms, and every $\Lambda$-system of homomorphisms gives rise to a $\Lambda$-system of correspondences.
In this section we will investigate the extent to which these two processes are reversible.

\begin{q}
When is 
a given $\Lambda$-system of correspondences
isomorphic to 
the one
associated to a $\Lambda$-system 
of homomorphisms?
\end{q}

Investigating this question requires us to examine balanced tensor products of standard correspondences.
First we observe without proof the following elementary fact.

\begin{lem}\label{compose}
Let $\varphi:A\to M(B)$ and $\psi:B\to M(C)$ be nondegenerate homomorphisms.
Then there is a unique $A-C$ correspondence isomorphism
\[
\theta:{}_\varphi B\otimes_B {}_\psi C\iso {}_{\psi\circ\varphi} C
\]
such that
\[
\theta(b\otimes c)=\psi(b)c\righttext{for}b\in B,c\in C.
\]
\end{lem}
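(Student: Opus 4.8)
The plan is to construct the isomorphism $\theta$ directly on the algebraic tensor product via the prescribed formula $\theta(b\otimes c)=\psi(b)c$, verify it respects the relevant structure, and then extend by continuity after checking it is isometric for the ${}_{\psi\circ\varphi}C$-valued inner product. First I would observe that ${}_\varphi B\otimes_B {}_\psi C$ is the balanced tensor product, so elements are spanned by $b\otimes c$ subject to the relation $bb'\otimes c=b\otimes\psi(b')c$ for $b,b'\in B$, $c\in C$. The candidate map $\theta_0(b\otimes c)=\psi(b)c$ is well-defined on this balanced product precisely because $\psi(bb')c=\psi(b)\psi(b')c$, so the balancing relation is respected; this uses that $\psi$ is a homomorphism extended to the multiplier algebra. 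I would define $\theta_0$ on the algebraic balanced tensor product and argue linearity is immediate.

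Next I would verify the inner-product computation, which is the crux of the isometry. For simple tensors,
\begin{align*}
\<\theta_0(b\otimes c),\theta_0(b'\otimes c')\>_C
&=\<\psi(b)c,\psi(b')c'\>_C
\\&=c^*\psi(b)^*\psi(b')c'
\\&=c^*\psi(b^*b')c',
\end{align*}
whereas in the balanced tensor product the inner product is $\<b\otimes c,b'\otimes c'\>_C=\<c,\psi(\<b,b'\>_B)c'\>_C=\<c,\psi(b^*b')c'\>_C=c^*\psi(b^*b')c'$, so these agree. This shows $\theta_0$ preserves the $C$-valued inner products on simple tensors, hence (by density and continuity of the inner product) on the completed module, so $\theta_0$ is isometric and extends to an isometry $\theta$ on all of ${}_\varphi B\otimes_B {}_\psi C$.

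It then remains to check that $\theta$ is a morphism of $A$--$C$ correspondences and is surjective. The right-$C$-module structure is immediate since $\theta_0(b\otimes c)c''=\psi(b)cc''=\theta_0(b\otimes cc'')$. For the left $A$-action, on ${}_\varphi B\otimes_B{}_\psi C$ we have $a\cdot(b\otimes c)=(\varphi(a)b)\otimes c$, and $\theta_0((\varphi(a)b)\otimes c)=\psi(\varphi(a)b)c=(\psi\circ\varphi)(a)\psi(b)c=(\psi\circ\varphi)(a)\cdot\theta_0(b\otimes c)$, which matches the left action on ${}_{\psi\circ\varphi}C$; here I extend $\varphi$ and $\psi$ to multipliers as usual. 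For surjectivity I would invoke nondegeneracy of $\psi$: since $\psi$ is nondegenerate, $\psi(B)C$ is dense in $C$, so the range of $\theta$ is dense, and being an isometry with closed range onto a dense subspace it is onto. Uniqueness follows because the formula determines $\theta$ on the dense set of simple tensors.

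I expect the main obstacle, modest as it is here, to be the bookkeeping around extending $\varphi$ and $\psi$ to the multiplier algebras and confirming that the nondegeneracy hypotheses are used exactly where needed — specifically, nondegeneracy of $\psi$ to obtain surjectivity and to make sense of $\psi\circ\varphi$ as a composition of nondegenerate homomorphisms (as flagged in \remref{processes}(1)). Since the paper states this as an elementary fact offered without proof, none of these steps involves genuine difficulty; the only care required is ensuring the balancing relation and the inner product identity are matched, which the computation above handles.
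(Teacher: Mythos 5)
Your proof is correct: the paper states this lemma explicitly \emph{without proof} (``we observe without proof the following elementary fact''), and your argument --- well-definedness on the balanced algebraic tensor product, the inner-product computation $c^*\psi(b^*b')c'$ on both sides giving an isometry, compatibility with the left $A$-action via the strict extension of $\psi$ to $M(B)$, surjectivity from density of $\psi(B)C$ by nondegeneracy of $\psi$, and uniqueness from totality of simple tensors --- is exactly the standard argument the authors have in mind. Nothing is missing; your flagged use of nondegeneracy of $\psi$ (both to define $\psi\circ\varphi$ and to get surjectivity) is precisely where the hypotheses enter.
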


We can analyze the question
of whether a given $\Lambda$-system $X$ of correspondences
is isomorphic to one coming from a $\Lambda$-system 
of homomorphisms
in several steps:

First of all, 
without loss of generality we can look for a $\Lambda$-system of homomorphisms
of the form $(\AA,\varphi)$.

Next, for each $\lambda\in u\Lambda v$ the $A_u-A_v$ correspondence $X_\lambda$ must be isomorphic to a standard one,
more precisely
there must exist
a linear bijection
\[
\theta_\lambda:X_\lambda\to A_v
\]
and
a nondegenerate homomorphism
\[
\varphi_\lambda:A_u\to M(A_v)
\]
such that
\begin{align}\label{theta}
&\theta_\lambda(\xi)^*\theta_\lambda(\eta)=\<\xi,\eta\>_{A_v}
\righttext{for all}\xi,\eta\in X_\lambda\\
&\theta_\lambda(a\cdot \xi\cdot b)=\varphi_\lambda(a)\theta_\lambda(\xi)b
\righttext{for all}a\in A_u,\xi\in X_\lambda,b\in A_v.
\end{align}

Moreover, whenever $\lambda\in u\Lambda v,\mu\in v\Lambda w$ we must have
\begin{align*}
{}_{\varphi_{\lambda\mu}}A_w
&=X_{\lambda\mu}
\\&\cong X_\lambda\otimes_{A_v} X_\mu
\\&={}_{\varphi_\lambda}A_v\otimes_{A_v} {}_{\varphi_\mu}A_w
\\&\cong {}_{\varphi_\mu\circ\varphi_\lambda}A_w,
\end{align*}
so there exists a unitary multiplier $U(\lambda,\mu)\in M(A_w)$ such that
\[
\varphi_\mu\circ\varphi_\lambda=\ad U(\lambda,\mu)\circ\varphi_{\lambda\mu}.
\]
The $U(\lambda,\mu)$'s satisfy a kind of ``two-cocycle'' identity coming from associativity of composition of the $\varphi_\lambda$'s.

Now, if this $\Lambda$-system of correspondences is isomorphic to one associated to a $\Lambda$-system $(\AA,\psi)$ of homomorphisms,
then for each $\lambda\in u\Lambda v$ 
we must have an isomorphism
${}_{\varphi_\lambda}A_v\cong {}_{\psi_\lambda}A_v$
of $A_u-A_v$ correspondences,
and so there must be a unitary multiplier
$W_\lambda\in M(A_v)$
such that
\[
\varphi_\lambda=\ad W_\lambda\circ \psi_\lambda.
\]
Since $(\AA,\psi)$ is a $\Lambda$-system of homomorphisms,
whenever $\lambda\in u\Lambda v,\mu\in v\Lambda w$ we have
\begin{align*}
\varphi_{\lambda\mu}
&=\ad W_{\lambda\mu}\circ \psi_{\lambda\mu}
\\&=\ad W_{\lambda\mu}\circ \psi_\mu\circ\psi_\lambda
\\&=\ad W_{\lambda\mu}\circ \ad W_\mu^*\circ\varphi_\mu\circ \ad W_\lambda^*\circ\varphi_\lambda
\\&=\ad W_{\lambda\mu}W_\mu^*\varphi_\mu(W_\lambda^*)\circ\varphi_\mu\circ\varphi_\lambda
\\&=\ad W_{\lambda\mu}W_\mu^*\varphi_\mu(W_\lambda^*)U(\lambda,\mu)\circ\varphi_{\lambda\mu},
\end{align*}
so since the homomorphisms $\varphi_\lambda$ are nondegenerate
we see that, in the 
quotient group
of the unitary multipliers of $A_w$ modulo the 
central unitary multipliers,
the cosets satisfy
\[
\bigl[U(\lambda,\mu)\bigr]
=\bigl[\varphi_\mu(W_\lambda)W_\mu W_{\lambda\mu}^*\bigr],
\]
giving a sort of cohomological obstruction (which we won't make precise)
to the $\Lambda$-system of correspondences being isomorphic to a one associated to a $\Lambda$-system $(\AA,\psi)$ of homomorphisms.

Note that if all the $C^*$-algebras $A_v$ are commutative,
then none of the above unitary multipliers appear,
so once we have $\theta_\lambda$'s and $\varphi_\lambda$'s satisfying \eqref{theta}
then the pair $(\AA,\varphi)$ will automatically be a $\Lambda$-system of homomorphisms whose associated $\Lambda$-system of correspondences is isomorphic to $X$.
What makes this happen is the way in which the correspondences $X_\lambda$ fit together.
This is worth recording:

\begin{prop}\label{commutative}
Let $X$ be a $\Lambda$-system of correspondences
such that every $A_v$ is commutative.
Then $X$
is isomorphic to the $\Lambda$-system associated to a $\Lambda$-system of homomorphisms if and only if,
whenever $\lambda\in u\Lambda v$,
$X_\lambda$ is isomorphic to a standard $A_u-A_v$ correspondence ${}_{\varphi_\lambda}A_v$.
\end{prop}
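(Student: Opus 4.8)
The plan is to prove both implications, with essentially all of the work in the ``if'' direction. For the ``only if'' direction, suppose $X$ is isomorphic, as a $\Lambda$-system, to the system $X'$ associated to some $\Lambda$-system $(\AA,\varphi)$ of homomorphisms. Then for each $\lambda\in u\Lambda v$ the component $\theta_\lambda$ of the isomorphism is by definition a correspondence isomorphism of $X_\lambda$ onto $X'_\lambda={}_{\varphi_\lambda}A_v$; pulling the standard structure back along the (coefficient) isomorphisms $\theta_u,\theta_v$ exhibits $X_\lambda$ as isomorphic to a standard $A_u-A_v$ correspondence, which is immediate.

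For the ``if'' direction I would first use the hypothesis to fix, for each $\lambda\in u\Lambda v$, a linear bijection $\theta_\lambda:X_\lambda\to A_v$ and a nondegenerate homomorphism $\varphi_\lambda:A_u\to M(A_v)$ satisfying \eqref{theta}, taking $\varphi_v=\id_{A_v}$ and $\theta_v=\id$ at the vertices. The first task is to check that $(\AA,\varphi)$ is genuinely a $\Lambda$-system of homomorphisms, i.e. that $\varphi_\mu\circ\varphi_\lambda=\varphi_{\lambda\mu}$ whenever $s(\lambda)=r(\mu)$. Here I would combine the multiplication isomorphism $X_\lambda\otimes_{A_v}X_\mu\cong X_{\lambda\mu}$ with \lemref{compose} and the maps $\theta_\lambda,\theta_\mu,\theta_{\lambda\mu}$ to produce an $A_u-A_w$ correspondence isomorphism ${}_{\varphi_{\lambda\mu}}A_w\cong{}_{\varphi_\mu\circ\varphi_\lambda}A_w$ with identity coefficient maps. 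Since $A_w$ is commutative, the commutative case of \cite[Proposition~2.3]{taco} forces $\varphi_{\lambda\mu}=\varphi_\mu\circ\varphi_\lambda$ on the nose. This is exactly the step where commutativity removes the adjoint/conjugation obstruction discussed above, so none of the unitary multipliers $U(\lambda,\mu)$ survive.

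It then remains to show that the associated $\Lambda$-system $X'$ is isomorphic to $X$. The maps $\theta_\lambda$ already satisfy the relaxed condition $(1)'$ for a $\Lambda$-system isomorphism, with all coefficient maps the identity, so everything comes down to multiplicativity: $\theta_\lambda(\xi)\theta_\mu(\eta)=\theta_{\lambda\mu}(\xi\eta)$, where the left-hand product, computed in $X'$ via \lemref{compose}, is $\varphi_\mu(\theta_\lambda(\xi))\theta_\mu(\eta)$. The crucial point is that, because each $A_v$ is commutative, I may replace any $\theta_\lambda$ by $u_\lambda\theta_\lambda$ for a unitary $u_\lambda\in M(A_v)$ without disturbing \eqref{theta}, since $u_\lambda$ commutes past $\varphi_\lambda(a)$. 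Measuring the failure of multiplicativity of the original $\theta$'s by unitaries $z(\lambda,\mu)\in M(A_w)$ --- the comparison of the two correspondence isomorphisms $X_\lambda\otimes_{A_v}X_\mu\to{}_{\varphi_{\lambda\mu}}A_w$ --- I would then choose the $u_\lambda$ so as to absorb the $z(\lambda,\mu)$ and recover multiplicativity for the adjusted family.

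I expect the main obstacle to be precisely this last adjustment: arranging the fiber isomorphisms to be simultaneously multiplicative. The discrepancies $z(\lambda,\mu)$ satisfy a two-cocycle identity forced by associativity of the multiplication in $X$, and making them a coboundary is where the genuine content lies; this is the concrete meaning of the remark that ``the way in which the correspondences $X_\lambda$ fit together'' is what makes the construction succeed. Commutativity is what gives me room to carry out the adjustment --- all the relevant multipliers are central, so the conjugation terms of the general obstruction disappear and only a scalar-type coboundary problem remains --- and this is where I would concentrate the argument.
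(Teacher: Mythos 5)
Your ``only if'' direction and the first half of your ``if'' direction are correct and coincide with the paper's treatment: the paper proves this proposition only via the discussion preceding it, in which commutativity of $M(A_w)$ trivializes every $\ad U(\lambda,\mu)$, so that \lemref{compose} together with the commutative case of \cite[Proposition~2.3]{taco} forces $\varphi_\mu\circ\varphi_\lambda=\varphi_{\lambda\mu}$ on the nose --- exactly your argument.

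The genuine gap is your last step. You correctly observe that the fiberwise isomorphisms $\theta_\lambda$ need not be multiplicative, that the discrepancies form a $2$-cocycle $z(\lambda,\mu)$ of unitary multipliers (commutativity preserving \eqref{theta} under the replacement $\theta_\lambda\mapsto u_\lambda\theta_\lambda$, as you say), and that one must exhibit this cocycle as a coboundary --- but you then only announce that you ``would choose the $u_\lambda$ so as to absorb the $z(\lambda,\mu)$,'' without proving this is possible. It is not possible in general when $k\ge 2$. Take $\Lambda=\N^2$ (one vertex $v$, degree functor the identity), $A_v=\C$, fix $\omega\in\T$ with $\omega\ne 1$, set $X_m=\C$ for every $m\in\N^2$, and define the multiplication by $x\cdot y=\omega^{m_2n_1}xy$ for $x\in X_m$, $y\in X_n$. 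This product is associative and normalized, each map $X_m\otimes X_n\to X_{m+n}$ is a correspondence isomorphism, so $X$ is a $\Lambda$-system of correspondences, and every fiber is literally the standard correspondence ${}_{\id}\C$, so the hypothesis of the proposition holds. Since $\id$ is the only nondegenerate homomorphism $\C\to\C$, the only available $\Lambda$-system of homomorphisms produces the untwisted system. A $\Lambda$-system isomorphism between the two would consist of scalars $u_m\in\T$ (the coefficient map on $\C$ being forced to be the identity) satisfying $u_mu_n=\omega^{m_2n_1}u_{m+n}$; evaluating at $(e_1,e_2)$ gives $u_{e_1}u_{e_2}=u_{e_1+e_2}$, while $(e_2,e_1)$ gives $u_{e_2}u_{e_1}=\omega\,u_{e_1+e_2}$, forcing $\omega=1$, a contradiction. (Equivalently, the Cuntz--Pimsner algebra of the twisted system is a noncommutative torus rather than $C(\T^2)$.)

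So your instinct about where ``the genuine content lies'' is exactly right --- indeed you have isolated the point that the paper's sketch silently passes over, since killing the conjugations $\ad U(\lambda,\mu)$ does not kill the unitaries themselves as multiplicativity discrepancies --- but the repair you propose cannot be carried out, and the counterexample shows the statement needs either an additional hypothesis (triviality of the cocycle, i.e.\ that the fiber isomorphisms can be chosen compatibly with the multiplications) or a restriction to $k=1$. For $k=1$ your plan does succeed, because the path category of a $1$-graph is free: set $u_e=1$ on edges and define $u_\lambda$ recursively along the unique factorization into edges, the cocycle identity guaranteeing consistency. For $k\ge 2$ the factorization squares create genuine second cohomology, and your proof is incomplete at precisely its self-declared crux.
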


\begin{prop}
Let $(\AA,\varphi)$ be a $\Lambda$-system of homomorphisms such that every $A_v$ is commutative,
and for each $v\in\Lambda^0$ let $T_v$ be the maximal ideal space of $A_v$.
Then there is a unique $\Lambda$-system of maps $(T,\sigma)$ such that $(\AA,\varphi)$ is the associated $\Lambda$-system of homomorphisms.
\end{prop}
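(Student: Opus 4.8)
The plan is to invoke Gelfand duality in its nondegenerate form. Since each $A_v$ is commutative, the Gelfand transform gives a canonical identification $A_v\cong C_0(T_v)$, where $T_v$ is the maximal ideal space; this is forced on us, because the ``associated'' $\Lambda$-system of homomorphisms of any $(T,\sigma)$ has section algebras $C_0(T_v)$ at the vertices. The only remaining freedom is the choice of the maps $\sigma_\lambda$, which must satisfy $\sigma_\lambda^*=\varphi_\lambda$ under this identification. So the whole problem reduces to producing, for each $\lambda\in u\Lambda v$, a continuous map $\sigma_\lambda\colon T_v\to T_u$ realizing $\varphi_\lambda$, and then checking that these maps assemble into a $\Lambda$-system.

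The key step is the bijective correspondence between continuous maps and nondegenerate homomorphisms (the existence half of which is standard Gelfand duality, complementing the direction already recorded in the excerpt). For each $\lambda\in u\Lambda v$ I would regard $\varphi_\lambda$ as a nondegenerate homomorphism $C_0(T_u)\to M(C_0(T_v))=C_b(T_v)$ and recover $\sigma_\lambda$ pointwise: for $t\in T_v$ the functional $f\mapsto\varphi_\lambda(f)(t)$ is a homomorphism $C_0(T_u)\to\C$, and nondegeneracy forces it to be nonzero --- if it vanished identically, then evaluating a finite sum $\sum_i\varphi_\lambda(f_i)g_i$ approximating an arbitrary $g\in C_0(T_v)$ at the point $t$ would give $g(t)=0$ for every such $g$, which is impossible. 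Hence it is evaluation at a unique point $\sigma_\lambda(t)\in T_u$, so that $\varphi_\lambda(f)(t)=f(\sigma_\lambda(t))$, i.e.\ $\varphi_\lambda=\sigma_\lambda^*$; continuity of $\sigma_\lambda$ is immediate from the weak topology on $T_u$. This is precisely where the nondegeneracy built into the definition of a $\Lambda$-system of homomorphisms is essential: it keeps $\sigma_\lambda$ from running off into a compactification of $T_u$.

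Next I would verify that $(T,\sigma)$ is a $\Lambda$-system of maps, which is pure functoriality. Since $\sigma\mapsto\sigma^*$ is contravariant and injective, I can transport the defining relations of $(\AA,\varphi)$. For $\lambda\in u\Lambda v$ and $\mu\in v\Lambda w$ with $s(\lambda)=r(\mu)$,
\[
(\sigma_\lambda\circ\sigma_\mu)^*=\sigma_\mu^*\circ\sigma_\lambda^*=\varphi_\mu\circ\varphi_\lambda=\varphi_{\lambda\mu}=\sigma_{\lambda\mu}^*,
\]
so injectivity gives $\sigma_\lambda\circ\sigma_\mu=\sigma_{\lambda\mu}$; likewise $\sigma_v^*=\varphi_v=\id_{A_v}=\id_{T_v}^*$ yields $\sigma_v=\id_{T_v}$. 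By construction $\sigma_\lambda^*=\varphi_\lambda$ for every $\lambda$, so $(\AA,\varphi)$ is indeed the $\Lambda$-system of homomorphisms associated to $(T,\sigma)$.

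Finally, uniqueness is essentially automatic: any $\Lambda$-system of maps $(T',\sigma')$ whose associated homomorphisms are $(\AA,\varphi)$ must satisfy $C_0(T'_v)=A_v$, so $T'_v$ is canonically homeomorphic to the maximal ideal space $T_v$, and the relation $(\sigma'_\lambda)^*=\varphi_\lambda=\sigma_\lambda^*$ forces $\sigma'_\lambda=\sigma_\lambda$ by injectivity of $\sigma\mapsto\sigma^*$. The only genuine content is thus the existence half of the Gelfand correspondence in the second paragraph; everything else is bookkeeping. I expect that to be the main (and only mild) obstacle, since it is where nondegeneracy must be invoked and where one must check that the recovered character is nonzero.
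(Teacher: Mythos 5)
Your proposal is correct and follows exactly the route the paper takes: the paper's proof is a one-line appeal to the duality between commutative $C^*$-algebras with nondegenerate homomorphisms into multiplier algebras and locally compact Hausdorff spaces with continuous maps, and your argument simply spells out that duality (recovering $\sigma_\lambda$ pointwise from characters, using nondegeneracy to keep the character nonzero, and transporting the composition laws via injectivity of $\sigma\mapsto\sigma^*$). No gaps; your write-up just makes explicit what the paper leaves implicit.
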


On the other hand, every $\Lambda$-system of homomorphisms is uniquely isomorphic to the one associated to a $\Lambda$-system of maps, at least in the only circumstances where it makes sense:

\begin{proof}
This follows immediately from the duality between the category of commutative $C^*$-algebras and nondegenerate homomorphisms into multiplier algebras and the category of locally compact Hausdorff spaces and continuous maps.
\end{proof}

\section{No higher-rank fractals}\label{no go}

In \propref{T' self}  we showed that every $\Lambda$ system of maps
$(T,\sigma)$ has a self-similar $k$-surjective $\Lambda$-subsystem 
$(T',\sigma|_{T'})$. The self-similar set $T'$ is the part of the system
that would generally be referred to as the ``fractal''. It is natural to
wonder whether the generalization to $k$-graphs presented here gives rise
to any new fractals that could not have arisen from the corresponding 
constructions for $1$-graphs.  The answer to this question turns out to
be
``no'' for reasons we will now explain.
Throughout the following discussion, let $p=(1,1,\dots,1)\in \mathbb{N}^k$

\begin{defn}
For a $k$-graph $\Lambda$ we define the 
\textit{diagonal 1-graph} $E$ 
as follows:
\begin{align*}
E^0&=\Lambda^0 
\\
E^1&=\{e_{\lambda}:\lambda \in \Lambda, d(\lambda)=p\} 
\\
r(e_{\lambda})&=r(\lambda)
\\
s(e_{\lambda})&=s(\lambda).
\end{align*}
 If $(T,\sigma)$ is a $\Lambda$-system
of maps, then we define the \textit{diagonal $E$-system} $(T,\rho)$ of
$(T,\sigma)$ to be the $E$-system of maps such that 
$\rho_{e_{\lambda}}=\sigma_{\lambda}$
for all $e_{\lambda}\in E^1$.
Finally, let $\alpha:E^*\to\Lambda$ be the map defined by $\alpha(e_{\lambda_1}e_{\lambda_2}\cdots
e_{\lambda_n})=\lambda_1\lambda_2\cdots\lambda_n$.
\end{defn}

\begin{prop}
The map $i:\Lambda^{\infty}\to E^{\infty}$ defined
by $\alpha (i(x)(j,l))=x(jp,lp)$ is a bijection
and $i^{-1}$ is continuous.
\end{prop}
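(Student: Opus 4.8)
The plan is to construct $i$ explicitly, recognize it as the map on infinite paths induced by ``blocking'' $\Lambda$ into segments of degree $p$, and then extract bijectivity and openness from the inverse-limit description of the two path spaces. First I would record the elementary but decisive fact that $\alpha$ restricts, for each $n$, to a \emph{bijection} $\alpha_n\colon E^n\to\Lambda^{np}$: both injectivity and surjectivity are immediate from the unique factorization property of $\Lambda$, since every $\nu\in\Lambda^{np}$ factors uniquely as a product of $n$ paths of degree $p$. In particular $\alpha$ is injective, so for $x\in\Lambda^\infty$ the equation $\alpha(i(x)(j,l))=x(jp,lp)$ determines $i(x)(j,l)$ uniquely, the right-hand side lying in $\Lambda^{(l-j)p}$, the image of $\alpha_{l-j}$. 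Concretely, $i(x)$ is the infinite path in $E$ whose $(j{+}1)$-st edge is $e_{x(jp,(j+1)p)}$. That these pieces assemble into a genuine element of $E^\infty$ follows by applying the injective, multiplicative map $\alpha$ to the required composition identities and using that $x$ is itself a $k$-graph morphism.

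For bijectivity I would pass to inverse limits. Truncation exhibits $\Lambda^\infty=\varprojlim_n\Lambda^n$ and $E^\infty=\varprojlim_n E^n$, exactly as in the truncation argument recorded earlier (the inverse system of surjections among finite sets whose inverse limit is $\Lambda^\infty$). The bijections $\alpha_n$ commute with the truncation maps --- dropping the last edge of a path in $E$ corresponds under $\alpha$ to truncating a path of degree $np$ down to degree $(n-1)p$ --- so $\{\alpha_n\}$ is an isomorphism of inverse systems $\{E^n\}\cong\{\Lambda^{np}\}$. Because $\{np\}_{n\in\N}$ is cofinal in $(\N^k,\le)$, namely $m\le np$ as soon as $n\ge\max_i m_i$, the subsystem $\{\Lambda^{np}\}$ has the same inverse limit as $\{\Lambda^m\}_{m\in\N^k}$, which is $\Lambda^\infty$. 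Composing yields a bijection $E^\infty\cong\Lambda^\infty$ realizing $i$. I would also spell this out by hand as a check: $i$ is injective because $i(x)=i(x')$ forces $x(0,np)=\alpha(i(x)(0,n))=x'(0,np)$ for all $n$, and these values determine $x$ by cofinality; and $i$ is surjective because any $y\in E^\infty$ produces a coherent family $\nu_n:=\alpha(y(0,n))\in\Lambda^{np}$ assembling into a unique $x$ with $i(x)=y$.

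Finally, continuity of $i^{-1}$ is equivalent to $i$ being an open map, so I would show that $i$ carries the basic cylinder sets to open sets. The task is to compute $i(\nu\Lambda^\infty)$ for $\nu\in\Lambda$. Fixing $n$ with $d(\nu)\le np$, and writing $y=i(x)$, the condition $x(0,d(\nu))=\nu$ becomes the condition that $\alpha(y(0,n))$, truncated to degree $d(\nu)$, equals $\nu$; this depends only on the initial edges $y(0,n)$. Hence $i(\nu\Lambda^\infty)$ is a union of cylinder sets $\beta E^\infty$ over the finitely many relevant $\beta\in E^n$, and is therefore open, which proves that $i^{-1}$ is continuous. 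I expect \textbf{this last step to be the main obstacle}: not because it is deep, but because it is where the mismatch between the $\N^k$-grading of $\Lambda$ and the $\N$-grading of $E$ must be handled, since a $\Lambda$-cylinder is cut out by a path $\nu$ of \emph{arbitrary} degree $d(\nu)$, not a multiple of $p$, so its image is generally a union of $E$-cylinders rather than a single one. The saving observation is that membership in that image still depends on only finitely many edges of $y$. (The same computation gives $i^{-1}(\beta E^\infty)=\alpha(\beta)\Lambda^\infty$, so $i$ is continuous as well and is in fact a homeomorphism, which is more than the statement requires.)
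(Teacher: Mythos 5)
Your proof is correct, and its bijectivity half is essentially the paper's argument in different packaging: the paper likewise rests everything on injectivity of $\alpha$ (there is only one way to write $np$ as a sum of $p$'s, plus unique factorization) and on the fact, cited from \cite[Remarks~2.2]{kp}, that an infinite path is determined by the coherent family $x(0,jp)$ because $\{jp\}_j$ is cofinal in $\N^k$; your isomorphism of inverse systems $\{E^n\}\cong\{\Lambda^{np}\}$ together with cofinality is exactly this observation, and your hand checks of injectivity and surjectivity reproduce the paper's almost verbatim. Where you genuinely diverge is the continuity of $i^{-1}$: the paper argues with nets, writing out the convergence criteria in both path spaces (eventual agreement of the first $n$ edges, respectively of the initial segments $x(0,np)$) and transporting one to the other through the explicit formula $i^{-1}(e_{\lambda_1}e_{\lambda_2}\cdots)=\lambda_1\lambda_2\cdots$, whereas you show $i$ is open by computing $i(\nu\Lambda^\infty)$ as the union of the cylinders $\beta E^\infty$ over those $\beta\in E^n$ (for any $n$ with $d(\nu)\le np$) such that $\alpha(\beta)(0,d(\nu))=\nu$. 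Your route has two modest advantages: it confronts the grading mismatch explicitly --- the paper's net criterion for convergence in $\Lambda^\infty$ silently uses the same cofinality to pass from agreement at the degrees $np$ to membership in an arbitrary cylinder $Z(\nu)$, which your union-of-cylinders computation makes visible --- and your parenthetical identity $i^{-1}(\beta E^\infty)=\alpha(\beta)\Lambda^\infty$ yields continuity of $i$ as well, so $i$ is in fact a homeomorphism, slightly more than the proposition asserts (and harmless, since the later applications only need $\Phi\circ i^{-1}$ continuous). One cosmetic point: openness of $i(\nu\Lambda^\infty)$ needs nothing about the set of relevant $\beta$ being finite, since any union of cylinders is open; row-finiteness does make it finite, as you say, but that finiteness is doing no work.
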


\begin{proof}
First we must show that this is well-defined. This just amounts
to showing that $\alpha$ is injective. To see this recall that 
if $\alpha(e_{\lambda_1}e_{\lambda_2}\cdots e_{\lambda_n})=\lambda$ then
$\lambda = \lambda_1\lambda_2\cdots\lambda_n$ where each $\lambda_i$
has degree $p$ and hence $d(\lambda_1\lambda_2\cdots\lambda_n)=np$. Since
there is only one way to write $np$ as a sum of $p$'s, there is only
one such decomposition of $\lambda$ (by unique factorization),
so if $\alpha(e_{\lambda_1}e_{\lambda_2}\cdots
e_{\lambda_n})=\alpha(e_{\gamma_1}e_{\gamma_2}\cdots e_{\gamma_n})$ we must have
$\lambda_i=\gamma_i$ for all $i$.

Next, to show that $i$ is injective, suppose $i(x)=i(y)$ for $x,y\in
\Lambda^{\infty}$. Then by definition we must have that $x(jp,lp)=y(jp,lp)$
for all $j,l\in\mathbb{N}$, and in particular we have that $x(0,jp)=y(0,jp)$
for all $j\in\mathbb{N}$. But since $\{jp\}_j$ is a cofinal increasing sequence
in $\mathbb{N}^k$, $x$ and $y$ are uniquely determined by their values
on the pairs $(0,jp)$ (see \cite[Remarks 2.2]{kp}) so we must have $x=y$.

Now, to show that $i$ is surjective, let $z\in E^{\infty}$. We wish
to find an infinite path $x\in\Lambda^{\infty}$ such that $i(x)=z$. 
We will again make use of the fact that such an $x$ is uniquely determined by its values on $(0,jp)$. Specifically, if $z(0,j)=e_{\lambda_1}e_{\lambda_2}
\cdots e_{\lambda_j}$, then we let $x(0,jp)=\lambda_1\lambda_2\cdots\lambda_j$.
Then $\alpha(i(x)(0,j))=x(0,jp)=\lambda_1\lambda_2\cdots\lambda_j$ and 
$\alpha(z(0,j))=\lambda_1\lambda_2\cdots\lambda_j$ so by the injectivity of
$\alpha$ we have that $i(x)(0,j)=z(0,j)$ and since $i(x)$ and $z$ are
uniquely determined by their values at $(0,j)$ we have that $i(x)=z$.

Finally, we need to show that $i^{-1}$ is continuous. We have
\[
\alpha (i(x)(j,l))=x(jp,lp)=\lambda_j...\lambda_l,
\]
where 
$\lambda_j...\lambda_l$ is the unique decomposition of $x(jp,lp)$ into
paths of degree $p$. Since $\alpha$ is injective, we get $i(x)(j,l)
=e_{\lambda_j}...e_{\lambda_l}$. Since this holds for all $(j,l)$ we
must have that $i^{-1}(e_{\lambda_1}e_{\lambda_2}...)=\lambda_1
\lambda_2...$ for all $e_{\lambda_1}e_{\lambda_2}...\in E^{\infty}$.
Recall that the topologies on 
$E^{\infty}$ and $\Lambda^{\infty}$ are generated by the collections 
$\{Z(P):P\in E^*\}$ and $\{Z(\lambda):\lambda \in \Lambda\}$ respectively
where 
$Z(P)=\{Pz:z\in s(P)E^{\infty}\}$
and 
$Z(\lambda)=\{\lambda x:x\in s(\lambda)\Lambda^{\infty}\}$.
Thus a net $\{\lambda^{\alpha}_1\lambda
^{\alpha}_2...\}_{\alpha\in A}$ in $\Lambda^{\infty}$
converges to $\lambda_1\lambda_2...$ in $\Lambda^{\infty}$ if for all 
$n\in \mathbb{N}$ there is
$\alpha_0\in A$ such that $\lambda^{\alpha}_j=\lambda_j$ 
for all $j\leq n$
and $\alpha\ge \alpha_0$,
and similarly
for nets in $E^{\infty}$. Now, suppose $\{e_{\lambda^{\alpha}_1}
e_{\lambda^{\alpha}_2}...\}_{\alpha\in A}$ 
converges to $e_{\lambda_1}e_{\lambda_2}...$ in $E^{\infty}$. Then for all 
$n\in \mathbb{N}$ there is
$\alpha_0\in A$ such that $e_{\lambda^{\alpha}_j}=e_{\lambda_j}$ 
for all $j\leq n$
and $\alpha\ge \alpha_0$.
Thus
$\lambda^{\alpha}_j=\lambda_j$ 
for all $j\leq n$ and $\alpha\ge \alpha_0$,
and we have shown that
the net 
$\{i^{-1}(e_{\lambda^{\alpha}_1}e_{\lambda^{\alpha}_2}...)\}_{\alpha\in A}=
\{\lambda^{\alpha}_1\lambda
^{\alpha}_2...\}_{\alpha\in A}$ 
converges to $i^{-1}(e_{\lambda_1}e_{\lambda_2}...)=\lambda_1\lambda_2...$
in $\Lambda^{\infty}$.
Therefore $i^{-1}$ is continuous.
\end{proof}

\begin{prop}\label{b}
Let $(T,\sigma)$ be a $\Lambda$-system of maps and
let $(T,\rho)$ be the diagonal $E$-system of $(T,\sigma)$.
If $\Phi:\Lambda^{\infty} \to T$ is intertwining with respect to $(T,\sigma)$
then $\Phi \circ i^{-1}:E^{\infty}\to T$ is intertwining with respect to
$(T,\rho)$.
\end{prop}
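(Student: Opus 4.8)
The plan is to set $\Psi := \Phi \circ i^{-1} \colon E^\infty \to T$ and verify the two requirements in the definition of an intertwining map for the diagonal $E$-system $(T,\rho)$: continuity, and the identity $\Psi \circ \tau^E_e = \rho_e \circ \Psi$ for every $e$ in $E$, where $\tau^E_e$ denotes the shift $z \mapsto ez$ on $E^\infty$ (and $\tau_\lambda$ the corresponding shift on $\Lambda^\infty$ as before). Continuity is immediate: $\Phi$ is continuous by hypothesis and $i^{-1}$ is continuous by the previous proposition, so $\Psi$ is a composition of continuous maps. For the intertwining identity, I would first note that since both the shift maps and the maps $\rho$ are multiplicative, i.e.\ $\tau^E_{ef} = \tau^E_e \circ \tau^E_f$ and $\rho_{ef} = \rho_e \circ \rho_f$ whenever $ef$ is defined, and both act as the identity on vertices, it suffices to verify the identity on the generators, namely on the edges $e_\lambda \in E^1$.

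The key step is a commutation relation between $i^{-1}$ and the two families of shift maps: for each edge $e_\lambda \in E^1$ (so $\lambda \in \Lambda$ with $d(\lambda)=p$),
\[
i^{-1} \circ \tau^E_{e_\lambda} = \tau_\lambda \circ i^{-1} \quad\text{on } s(\lambda)E^\infty .
\]
To prove this I would use the explicit description of $i^{-1}$ obtained in the previous proposition, namely $i^{-1}(e_{\mu_1} e_{\mu_2} \cdots) = \mu_1 \mu_2 \cdots$. For $z = e_{\mu_1} e_{\mu_2}\cdots \in s(\lambda)E^\infty$ this gives
\[
i^{-1}(e_\lambda z) = \lambda\mu_1\mu_2\cdots = \tau_\lambda(\mu_1\mu_2\cdots) = \tau_\lambda\bigl(i^{-1}(z)\bigr),
\]
which is precisely the asserted relation. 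The only point requiring care is that prepending the letter $e_\lambda$ to $z$ corresponds under $i^{-1}$ exactly to prepending $\lambda$; this is where the compatibility of $\alpha$ with concatenation (and its injectivity) is used.

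Granting the commutation relation, the conclusion follows from a short chain of equalities on $s(\lambda)E^\infty$:
\[
\Psi \circ \tau^E_{e_\lambda} = \Phi \circ i^{-1} \circ \tau^E_{e_\lambda} = \Phi \circ \tau_\lambda \circ i^{-1} = \sigma_\lambda \circ \Phi \circ i^{-1} = \rho_{e_\lambda} \circ \Psi,
\]
where the third equality is the hypothesis that $\Phi$ is intertwining for $(T,\sigma)$, and the last uses the defining relation $\rho_{e_\lambda} = \sigma_\lambda$ of the diagonal $E$-system. Extending from generators to arbitrary paths in $E$ via the multiplicativity noted in the first paragraph then yields the full intertwining identity, so $\Psi = \Phi\circ i^{-1}$ is intertwining for $(T,\rho)$. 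I expect the commutation relation of the second paragraph to be the only substantive step, and even it is essentially a restatement of the formula for $i^{-1}$ already established; the remainder is bookkeeping.
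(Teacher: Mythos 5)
Your proposal is correct and follows essentially the same route as the paper: the heart of both arguments is the relation $i^{-1}(e_\lambda x)=\lambda\, i^{-1}(x)$ (your commutation relation $i^{-1}\circ\tau^E_{e_\lambda}=\tau_\lambda\circ i^{-1}$), followed by the same chain of equalities using the intertwining property of $\Phi$ and $\rho_{e_\lambda}=\sigma_\lambda$. You are merely more explicit than the paper about two points it leaves implicit --- continuity of $\Phi\circ i^{-1}$ and the extension from edges to arbitrary paths via multiplicativity --- which is fine but not a different proof.
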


\begin{proof}
We have:
\begin{displaymath}
\Phi \circ i^{-1}\circ \tau_{e_{\lambda}}(x)=\Phi(i^{-1}(e_{\lambda}x))
=\Phi(\lambda i^{-1}(x))=\Phi \circ \tau_{\lambda}(i^{-1}(x))
\end{displaymath} 
but since $\Phi$ is intertwining, this gives:
\begin{displaymath}
=\sigma_{\lambda} \circ \Phi(i^{-1}(x))=\rho_{e_{\lambda}}\circ \Phi \circ
i^{-1}(x).
\end{displaymath}
Since $x$ was arbitrary, we have $(\Phi\circ i^{-1}) \circ \tau_{\lambda}=
\rho_{\lambda} \circ (\Phi \circ i^{-1})$ so $\Phi \circ i^{-1}$ is 
intertwining with respect to $(T,\rho)$.
\end{proof}

\begin{defn}
If $(T,\sigma)$ is a $\Lambda$
system of maps, $\Phi$ is an intertwining map, and $(T',\sigma|_{T'})$
is the self-similar $k$-surjective
$\Lambda$-subsystem of 
\propref{T' self}, then we call
 $T'$ the \textit{attractor} of $(T,\sigma,\Phi)$.
\end{defn}

\begin{thm}Let $\Lambda$ be a $k$-graph. Suppose $(T,\sigma)$ is
a $\Lambda$-system 
of maps, $\Phi$ is an intertwining map with respect to $(T,\sigma)$, and
$T'$ is the attractor of $(T,\sigma,\Phi)$. Then there
exist a $1$-graph $E$ with $E^0=\Lambda^0$, an $E$-system of
maps $(T,\rho)$, and an intertwining map $\Psi$ with respect to $(T,\rho)$
such that if $T''$ is the attractor of $(T,\rho,\Psi)$ then $T''=T'$.
\end{thm}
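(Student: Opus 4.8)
The plan is to let $E$ be the diagonal $1$-graph, let $(T,\rho)$ be the diagonal $E$-system, and take $\Psi=\Phi\circ i^{-1}$, and then to check that the two attractors agree fibrewise. All three ingredients are essentially forced upon us by the preceding constructions: $E$ and $(T,\rho)$ are already defined, and \propref{b} says precisely that $\Psi=\Phi\circ i^{-1}$ is intertwining with respect to $(T,\rho)$. Since $i^{-1}$ is continuous (by the preceding proposition) and $\Phi$ is continuous, $\Psi$ is a genuine continuous intertwining map, so its attractor $T''$ is defined as in \propref{T' self}.

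First I would record that $E$ is itself a row-finite $1$-graph with no sources, so that $E^{\infty}$ and the attractor construction behave as expected. Indeed, row-finiteness of $E$ amounts to finiteness of each $v\Lambda^{p}$, which holds because $\Lambda$ is row-finite; and the absence of sources amounts to $v\Lambda^{p}\neq\varnothing$ for every $v$, which follows from $\Lambda$ having no sources together with the factorization property (write $p=e_1+\dots+e_k$ and extend edges one coordinate at a time).

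The crux is to compute the two attractors and match them. By \propref{T' self} we have $T'_v=\Phi(v\Lambda^{\infty})$ and $T''_v=\Psi(vE^{\infty})=\Phi\bigl(i^{-1}(vE^{\infty})\bigr)$, so it suffices to show that $i$ carries the vertex decomposition of $\Lambda^{\infty}$ onto that of $E^{\infty}$, i.e. $i^{-1}(vE^{\infty})=v\Lambda^{\infty}$ for each $v\in\Lambda^0=E^0$. This is the only real (if small) point, and it is immediate from the construction of $i$: if $x\in\Lambda^{\infty}$ has $x(0,p)=\lambda_1$ as the degree-$p$ initial segment, then $i(x)(0,1)=e_{\lambda_1}$, and since $r(e_{\lambda_1})=r(\lambda_1)=r(x)$ we get $r(i(x))=r(x)$. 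Thus $i$ is range-preserving, and being a bijection it restricts to a bijection $v\Lambda^{\infty}\to vE^{\infty}$ for every $v$.

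Combining these, $T''_v=\Phi\bigl(i^{-1}(vE^{\infty})\bigr)=\Phi(v\Lambda^{\infty})=T'_v$ for each $v$, whence $T''=\bigcup_v T''_v=\bigcup_v T'_v=T'$. The main obstacle is not any hard estimate but an organizational one: essentially all of the content has been front-loaded into the bijection $i$ and into \propref{b}, so the theorem collapses to the observation that $i$ respects range vertices. I would expect the write-up to be short once the fibrewise reformulation of ``attractor'' via \propref{T' self} is made explicit.
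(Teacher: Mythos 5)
Your proposal is correct and follows essentially the same route as the paper: take $E$ to be the diagonal $1$-graph, $(T,\rho)$ the diagonal $E$-system, $\Psi=\Phi\circ i^{-1}$ (intertwining by \propref{b}), and then compute $T''_v=\Psi(vE^{\infty})=\Phi(i^{-1}(vE^{\infty}))=\Phi(v\Lambda^{\infty})=T'_v$. Your additional checks (that $E$ is row-finite with no sources, and that $i$ preserves range vertices so that $i^{-1}(vE^{\infty})=v\Lambda^{\infty}$) are small points the paper leaves implicit, so they only make the argument more complete.
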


\begin{proof}
Let $E$ be the diagonal $1$-graph of $\Lambda$, $(T,\rho)$
be the diagonal $E$-system of $(T,\sigma)$ , and $\Psi=\Phi \circ i^{-1}$.
\propref{b} shows that this is  an intertwining map. For all $v\in \Lambda^0$ we have
\begin{displaymath}
T''_v=\Psi(vE^{\infty})=\Phi(i^{-1}(vE^{\infty}))=\Phi(v\Lambda^{\infty})=T'_{v},
\end{displaymath}
and hence $T''=T'$.
\end{proof}


\providecommand{\bysame}{\leavevmode\hbox to3em{\hrulefill}\thinspace}
\providecommand{\MR}{\relax\ifhmode\unskip\space\fi MR }
\providecommand{\MRhref}[2]{%
  \href{http://www.ams.org/mathscinet-getitem?mr=#1}{#2}
}
\providecommand{\href}[2]{#2}

\end{document}